\newtheorem{assumption}{Assumption}
\newcommand{\norm}[1]{\Vert #1 \Vert}
\newcommand{\du}{\mathrm{d}}
\newcommand{\eu}{\mathrm{e}}
\DeclareMathOperator{\dom}{\mathrm{dom}}
\DeclareMathOperator{\inte}{\mathrm{int}}
\DeclareMathOperator*{\argmin}{ \arg \min }
\DeclareMathOperator{\tr}{\mathrm{Tr}}
\newcommand{\fqst}{f_{\text{QST}}}
\begin{document}

\title{A General Convergence Result for \\ Mirror Descent with Armijo Line Search}

\author[1]{Yen-Huan~Li}
\author[1,2]{Carlos A. Riofr\'{\i}o}
\author[1]{Volkan~Cevher}

\affil[1]{\'{E}cole polytechnique f\'{e}d\'{e}rale de Lausanne, Switzerland}
\affil[2]{Freie Universit\"{a}t Berlin, Germany}

\date{}


\maketitle

\begin{abstract}
Existing convergence guarantees for the mirror descent algorithm require the objective function to have a bounded gradient or be smooth relative to a Legendre function. 
The bounded gradient and relative smoothness conditions, however, may not hold in important applications, such as quantum state tomography and portfolio selection. 
In this paper, we propose a local version of the relative smoothness condition as a generalization of its existing global version, and prove that under this local relative smoothness condition, the mirror descent algorithm with Armijo line search always converges. 
Numerical results showed that, therefore, the mirror descent algorithm with Armijo line search was the fastest guaranteed-to-converge algorithm for quantum state tomography, empirically on real data-sets. 
\end{abstract}

\section{Introduction}

Consider a constrained convex optimization problem: 
\begin{equation}
f^\star = \min_{x} \set{ f ( x ) | x \in \mathcal{X} } , \tag{P} \label{eq_problem_general}
\end{equation}
where $f$ is a convex differentiable function, and $\mathcal{X}$ is a convex closed set in $\mathbb{R}^d$. 
We assume that $f^\star > - \infty$. 

The mirror descent algorithm is standard for solving such a constrained convex optimization problem \cite{Beck2003,Nemirovsky1983}. 
Given an initial iterate $x_0 \in \mathcal{X}$, the mirror descent algorithm iterates as
\begin{equation}
x_{ k + 1 } = \argmin_x \Set{ \braket{ \nabla f ( x_k ), x - x_k } + \alpha_k D_h ( x, x_k ) | x \in \mathcal{X} } \, , \quad \forall k \in \mathbb{N} \, , \label{eq_mirror_descent}
\end{equation}
for some convex differentiable function $h$ and a properly chosen sequence of step sizes $\set{ \alpha_k }$, where $D_h$ denotes the Bregman divergence induced by $h$: 
\begin{equation}
D_h ( z_2, z_1 ) := h ( z_2 ) - \left[ h ( z_1 ) + \braket{ \nabla h ( z_1 ), z_2 - z_2 } \right] , \quad \forall ( z_2, z_2 ) \in \dom h \times \dom \nabla h \, . \notag
\end{equation}
With a proper choice of the funciton $h$, the mirror descent algorithm can have an almost dimension-independent convergence rate guarantee, or lower per-iteration computational complexity. 
A famous example is the exponentiated gradient method, which enjoys both benefits \cite{Juditsky2012,Kivinen1997}. 
The exponentiated gradient method corresponds to the mirror descent algorithm with $h$ being the negative Shannon entropy. 

Convergence of the mirror descent algorithm has been established under the following two conditions on the objective function. 
\begin{enumerate}
\item Bounded gradient: There exists some $L > 0$, such that
\begin{equation}
\norm{ \nabla f ( x ) } \leq L, \quad \forall x \in \mathcal{X} , \notag
\end{equation}
for some norm $\norm{ \cdot }$ \cite{Beck2003,Nemirovsky1983}\footnote{To be precise, results in this direction assume that there exists a function $g$ satisfying
\begin{equation}
g ( x ) \in \partial f ( x ) \, , \quad \norm{ g ( x ) } \leq L \, , \quad \forall x \in \mathcal{X} \, . \notag
\end{equation}
where $\partial f ( x )$ denotes the sub-differential of $f$ at the point $x$.} 
\item Relative smoothness: There exist some $L > 0$ and a convex differentiable function $h$, such that
\begin{equation}
f ( y ) \leq f ( x ) + \braket{ \nabla f ( x ), y - x } + L D_h ( y, x ) , \quad \forall x, y \in \mathcal{X} , \notag
\end{equation}
where $D_h$ denotes the Bregman divergence induced by $h$ \cite{Auslender2006,Bauschke2017,Lu2018}. 
\end{enumerate}
These conditions may not hold, or introduce undesirable computational burdens for some applications. 
Quantum state tomography is one such instance. 

\begin{example} \label{exp_qst}
\emph{Quantum state tomography (QST)} is the task of estimating the state of qubits (quantum bits) given measurement outcomes \cite{Paris2004}; 
this task is essential to calibrating quantum computation devices. 
Numerically, it corresponds to minimizing the function
\begin{equation}
f_{\text{QST}} ( x ) := - \sum_{i = 1}^n \log \tr ( M_i x ) , \notag
\end{equation}
for given positive semi-definite matrices $M_i$, on the set of \emph{quantum density matrices}
\begin{equation}
\mathcal{D} := \Set{ x \in \mathbb{C}^{d \times d} | x \geq 0, \tr ( x ) = 1 } . \label{eq_density_matrix}
\end{equation}
The dimension $d$ equals $2^q$, where $q$ is the number of qubits (quantum bits). 
\end{example}

Notice that the diagonal of a density matrix in $\mathbb{R}^{d \times d}$ must belong to the probability simplex in $\mathbb{R}^d$; 
therefore, a density matrix can be viewed as a matrix analogue of a probability distribution. 
Regarding this observation, it is natural to consider the matrix version of the exponentiated gradient method, for which the Shannon entropy is replaced by its matrix analogue called the von Neumann entropy \cite{Bubeck2015a,Tsuda2005}. 
Unfortunately, the following is easily checked. 

\begin{proposition} \label{prop_not_applicable}
The gradient of the function $f_{\text{QST}}$ is not bounded. 
The function $f_{\text{QST}}$ is not smooth relative to the von Neumann entropy. 
\end{proposition}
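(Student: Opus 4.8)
The plan is to establish the two claims separately, each by exhibiting a simple explicit obstruction.

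\textbf{Unboundedness of the gradient.} First I would compute $\nabla \fqst(x) = - \sum_{i=1}^n M_i / \tr(M_i x)$. The idea is that as $x$ approaches the boundary of $\mathcal{D}$, one of the traces $\tr(M_i x)$ can be driven to zero while the corresponding $M_i \neq 0$, so the gradient blows up in norm. Concretely, I would pick a measurement operator $M_i$ that is not full rank (this is the generic case — e.g. rank-one projectors $M_i = \ket{\psi_i}\bra{\psi_i}$ are standard in QST) and choose a sequence $x_t \in \mathcal{D}$ supported increasingly on the kernel of $M_i$, so that $\tr(M_i x_t) \to 0^+$. Then $\norm{\nabla \fqst(x_t)} \geq \norm{M_i} / \tr(M_i x_t) \to \infty$, contradicting the existence of a uniform bound $L$. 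A one-line remark suffices that such $x_t$ exist inside $\mathcal{D}$ whenever $\ker M_i \neq \{0\}$, which I would simply assume is the relevant regime; alternatively, even a full-rank but ill-conditioned family makes the constant $L$ blow up, but the clean statement uses rank-deficient $M_i$.

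\textbf{Failure of relative smoothness with respect to the von Neumann entropy.} Recall the von Neumann entropy $h(x) = \tr(x \log x)$, whose Bregman divergence is the quantum relative entropy $D_h(y,x) = \tr(y \log y) - \tr(y \log x) - \tr(y - x)$. Relative smoothness would require the existence of $L>0$ with $L h - \fqst$ convex on $\mathcal{D}$ (the standard reformulation of the relative smoothness inequality). I would disprove this by looking at second-order behaviour along a line, or more simply by a direct two-point violation. The cleanest route is to reduce to the scalar/commuting case: restrict both functions to a fixed eigenbasis so that everything becomes a function on the probability simplex, where $h(p) = \sum_j p_j \log p_j$ and $\fqst$ restricts to $- \sum_i \log(\sum_j (M_i)_{jj} p_j)$, a finite sum of negative logs of linear forms. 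Near a boundary point where some linear form $\langle a_i, p\rangle \to 0$, the second derivative of $\fqst$ along a suitable direction scales like $1/\langle a_i,p\rangle^2$, whereas the second derivative of $h$ along that same direction scales only like $1/\langle e_j, p\rangle = 1/p_j$, i.e. like $1/\langle a_i, p \rangle$ to first order if $a_i$ is a coordinate vector, or can even stay bounded. Since $1/t^2$ dominates $1/t$ as $t \to 0^+$, no finite multiple $L$ of $h$'s Hessian can dominate $\fqst$'s Hessian near the boundary, so $Lh - \fqst$ fails to be convex for every $L$.

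\textbf{Main obstacle.} The gradient part is essentially immediate; the genuine work is in the second claim, specifically in matching the boundary blow-up rates of the two Bregman divergences in the \emph{non-commuting} matrix setting, since the Hessian of the von Neumann entropy is the awkward operator $\int_0^\infty (sI+x)^{-1}(\cdot)(sI+x)^{-1}\,\du s$ rather than a simple diagonal matrix. I expect to sidestep this entirely by choosing the hard instance to lie in a commuting family (a fixed eigenbasis), reducing the matrix inequality to the scalar inequality on the simplex, where the rate comparison $1/t^2 \gg 1/t$ is transparent. If a fully non-commuting argument were wanted, one would instead test the relative smoothness inequality directly at two density matrices $y$ near the boundary and $x$ in the interior and show the left side grows faster than $L D_h(y,x)$ for any fixed $L$; but the commuting reduction is the economical choice and is what I would write up.
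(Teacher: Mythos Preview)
Your proposal is correct and follows essentially the same approach as the paper: the paper also reduces to a commuting (diagonal) instance---specifically the $2\times 2$ case with $M_1=e_1e_1^\top$, $M_2=e_2e_2^\top$, so that everything collapses to $g(x,y)=-\log x-\log y$ on the simplex---and then observes that the gradient blows up near the boundary and that the Hessian comparison $L/x - 1/x^2 \geq 0$ fails for $x<1/L$, which is precisely your $1/t^2$ versus $1/t$ rate argument. The only cosmetic difference is that the paper fixes the explicit $2$-dimensional counterexample from the outset rather than first arguing in general and then specializing.
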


A proof is given in Section \ref{proof_not_applicable}. 

Another popular choice of the function $h$ is Burg's entropy. 
The resulting mirror descent algorithm iterates as
\begin{equation}
x_{k + 1} = \left( x_k^{-1} + \alpha_k \nabla f ( x_k ) \right)^{-1} \, , \quad \forall k \in \mathbb{N} \, , \notag
\end{equation}
where $\alpha_k$ is chosen such that $\tr ( x_{k + 1} ) = 1$ \cite{Kulis2009}. 
The numerical search for $\alpha_k$ yields high per-iteration computational complexity of the mirror descent algorithm. 

We note that in terms of the objective functions and constraint sets, positron emission tomography, optimal portfolio selection, and non-negative linear inverse problems are essentially vector analogues of QST \cite{Byrne2001,Cover1984,Vardi1985}. 
The same issues we have discussed above remain in these applications, though the computational burden due to the Burg entropy may be relatively minor in these vector analogues. 

To address ``non-standard'' applications like QST, we relax the condition on the objective function. 
Specifically, we propose a novel localized version of the relative smoothness condition. 
The local relative smoothness condition does not involve any parameter, in comparison to the bounded gradient and (global) relative smoothness conditions. 
Therefore, we do not seek for a closed-form expression for the step sizes; instead, we consider selecting the step sizes adaptively by Armijo line search. 

\subsection{Related work}

The mirror descent algorithm was introduced in \cite{Nemirovsky1983}. 
The formulation \eqref{eq_mirror_descent} was proposed in \cite{Beck2003}, which is equivalent to the original one under standard assumptions. 
The interior gradient method studied in \cite{Auslender2006} is also of the form \eqref{eq_mirror_descent}; the difference lies in the technical conditions. 
Standard convergence analyses of the mirror descent, as discussed above, assume either bounded gradient or relative smoothness \cite{Auslender2006,Bauschke2017,Beck2003,Lu2018,Nemirovsky1983}. 
The exponentiated gradient method was proposed in \cite{Kivinen1997}; it is also known as the entropic mirror descent \cite{Beck2003}. 

For quantum state tomography, there are few guaranteed-to-converge optimization algorithms. 
The $R \rho R$ algorithm was proposed as an analogue of the expectation maximization (EM) algorithm \cite{Hradil1997}, but does not always converge \cite{Rehacek2007}. 
The diluted $R \rho R$ algorithm is a variant of the $R \rho R$ algorithm; it guarantees convergence by exact line search \cite{Rehacek2007}. 
The Frank-Wolfe algorithm converges with a step size selection rule slightly different from the standard one \cite{Odor2016}. 
The SCOPT algorithm proposed in \cite{Tran-Dinh2015b}, a proximal gradient method for composite self-concordant minimization, also converges, as the logarithmic function is a standard instance of a self-concordant function. 
The numerical results in Section \ref{sec_numerical}, unfortunately, showed that the convergence speeds of the diluted $R \rho R$, Frank-Wolfe, and SCOPT algorithms are not satisfactory on real data-sets. 

For the vector analogues of QST mentioned above, the standard approach is the EM algorithm \cite{Cover1984,Csiszar1984,Vardi1985}. 
The algorithm is also known as the Richardson-Lucy (RL) algorithm in astronomy and microscopy (see, e.g., \cite{Bertero2009}). 
The numerical results in Section \ref{sec_numerical} showed that the EM algorithm is slow on real data-sets for portfolio selection. 
There are faster accelerated versions of the EM algorithm based on line search, but they lack convergence guarantees \cite{Bertero2009}.  
Guranteed-to-converge variable metric methods with line search were proposed in \cite{Bonettini2016,Bonettini2009}, but they involve an infinite number of parameters to be properly tuned. 

Armijo line search was proposed in \cite{Armijo1966}, for minimizing functions with Lipschitz gradients. 
The formulation of Armijo line search studied in this paper is the generalized version proposed in \cite{Bertsekas1976}. 

\subsection{Contributions}

We propose a novel \emph{local relative smoothness condition}, and show that the condition is satisfied by a large class of objective functions. 
The main result is Theorem \ref{thm_main_MD}, which establishes convergence of the mirror descent algorithm with Armijo line search under the local relative smoothness condition. 
Numerical results showed that, because of Theorem \ref{thm_main_MD}, the exponentiated gradient method with Armijo line search was the fastest guaranteed-to-converge algorithm for QST, empirically on real data-sets. 
To the best of our knowledge, even for globally relatively smooth objective functions, convergence of mirror descent with Armijo line search has not been proven; 
Theorem \ref{thm_main_MD} provides the first convergence guarantee for this setup. 


\section{Mirror Descent with Armijo Line Search} \label{sec_MD}

Let $h$ be a convex differentiable function strictly convex on $\mathcal{X}$. 
The corresponding Bregman divergence is given by
\begin{equation}
D_h ( z_2, z_1 ) := h ( z_2 ) - \left[ h ( z_1 ) + \braket{ \nabla h ( z_1 ), z_2 - z_2 } \right] , \quad \forall ( z_2, z_2 ) \in \dom h \times \dom \nabla h . \notag
\end{equation}
Because of the strict convexity of $h$, it holds that $D_h ( z_2, z_1 ) \geq 0$, and $D_h ( z_2, z_1 ) = 0$ if and only if $z_2 = z_1$. 

Define $\tilde{\mathcal{X}} := \mathcal{X} \cap \dom \nabla f \cap \dom \nabla h$. 
The corresponding mirror descent algorithm starts with some $x_0 \in \tilde{\mathcal{X}}$, and iterates as
\begin{equation}
x_k = x_{k - 1} ( \alpha_k ) := \argmin_x \Set{ \alpha_k \braket{ \nabla f ( x_{k - 1} ), x - x_{k - 1} } + D_h ( x, x_{k - 1} ) | x \in \mathcal{X} } , \quad \forall k \in \mathbb{N} , \notag
\end{equation}
where $\alpha_k$ denotes the step size. 
To ensure that the mirror descent algorithm is well-defined, we will assume the following throughout this paper. 

\begin{assumption} \label{assump_MD}
For every $x \in \tilde{\mathcal{X}}$ and $\alpha \geq 0$, $x ( \alpha )$ is uniquely defined and lies in $\tilde{\mathcal{X}}$. 
\end{assumption}

There are several sufficient conditions that guarantee Assumption \ref{assump_MD}, but in practice, it is typically easier to directly check Assumption \ref{assump_MD}. 
The interested reader is referred to, e.g., \cite{Bauschke2017,Bauschke2001} for the details. 

\begin{algorithm}[t]

\caption{Mirror Descent with Armijo Line Search}

\label{alg_MD}

\begin{algorithmic}[1]
\Require 
$\bar{\alpha} > 0$, $r \in ( 0, 1 )$, $\tau \in ( 0, 1 )$, $x_0 \in \mathcal{X}_h$
\For{$k = 1, 2, \ldots$}
\State $\alpha_k \leftarrow \bar{\alpha}$
\While{$ \tau \Braket{ \nabla f ( x_{k - 1} ) , x_{k - 1} ( \alpha_k ) - x_{k - 1} } + f ( x_{k - 1} ) < f( x_{k - 1} ( \alpha_k ) ) $}
\State $\alpha_k \leftarrow r \alpha_k$
\EndWhile
\State $x_{k} \leftarrow x_{k - 1} ( \alpha_k )$
\EndFor
\end{algorithmic}

\end{algorithm}

We consider choosing the step sizes by the Armijo rule. 
Let $\bar{\alpha} > 0$ and $r, \tau \in ( 0, 1 )$. 
The Armijo rule outputs $\alpha_k = r^j \bar{\alpha}$ for every $k$, where $j$ is the least non-negative integer such that 
\begin{equation}
f ( x_{k - 1} ( r^j \bar{\alpha} ) ) \leq f ( x_{k - 1} ) + \tau \braket{ \nabla f ( x_{k - 1} ), x_{k - 1} ( r^j \bar{\alpha} ) - x_{k - 1} } . \notag
\end{equation}
The Armijo rule can be easily implemented by a while-loop, as shown in Algorithm \ref{alg_MD}. 

\section{Local Relative Smoothness} \label{sec_relative_smoothness}

In this section, we introduce the local relative smoothness condition, and provide a detailed discussion. 
In particular, we provide some practical approaches to checking the local relative smoothness condition, alone with concrete examples illustrating when the practical approaches can and cannot be applied. 

Roughly speaking, the local relative smoothness condition asks that for every point, there exists a neighborhood on which $f$ is relatively smooth. 

\begin{definition} \label{def_local_smoothness}
We say that $f$ is locally smooth relative to $h$ on $\mathcal{X}$, if for every $x \in \mathcal{X} \cap \dom f$, there exist some $L_x > 0$ and $\varepsilon_x > 0$, such that 
\begin{equation}
f ( z_2 ) \leq f ( z_1 ) + \braket{ \nabla f ( z_1 ), z_2 - z_1 } + L_x D_h ( z_2, z_1 ) , \quad \forall z_1, z_2 \in \mathcal{B}_{\varepsilon_x} ( x ) \cap \tilde{\mathcal{X}} , \label{eq_local_smoothness}
\end{equation}
where $\mathcal{B}_{\varepsilon_x} ( x )$ denotes the ball centered at $x$ of radius $\varepsilon_x$ with respect to a norm. 
\end{definition}


If we set $h: x \mapsto ( 1 / 2 ) \norm{ x }_2^2$, then \eqref{eq_local_smoothness} becomes
\begin{equation}
f ( z_2 ) \leq f ( z_1 ) + \braket{ \nabla f ( z_1 ), z_2 - z_1 } + \frac{L_x}{2} \norm{ z_2 - z_1 }_2^2 , \quad \forall z_1, z_2 \in \mathcal{B}_{\varepsilon_x} ( x ) \cap \tilde{\mathcal{X}} , \notag
\end{equation}
This is indeed the the locally Lipschitz gradient condition in literature. 

\begin{lemma} \label{lem_equiv}
The following two statements are equivalent. 
\begin{enumerate}
\item The function $f$ is locally smooth relative to $h: x \mapsto ( 1 / 2 ) \norm{ x }_2^2$ on $\mathcal{X}$. 
\item Its gradient $\nabla f$ is locally Lipschitz on $\inte \tilde{\mathcal{X}}$; that is, for every $x \in \mathcal{X} \cap \dom f$, there exists some $L_x > 0$ and $\varepsilon_x > 0$, such that 
\begin{equation}
\norm{ \nabla f ( z_2 ) - \nabla f ( z_1 ) }_2 \leq L_x \norm{ z_2 - z_1 }_2, \quad \forall z_1, z_2 \in \mathcal{B}_{\varepsilon_x} ( x ) \cap \tilde{\mathcal{X}} . \notag
\end{equation}
\end{enumerate}
\end{lemma}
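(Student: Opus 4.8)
The plan is to prove the two implications separately, using the standard equivalence between quadratic upper bounds and Lipschitz gradients, but carried out on a neighborhood rather than globally — so the only real work is bookkeeping the radii and convexity of the domains involved.

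\emph{From (2) to (1).} Fix $x \in \mathcal{X} \cap \dom f$ and take $L_x, \varepsilon_x$ from the local Lipschitz hypothesis. For $z_1, z_2 \in \mathcal{B}_{\varepsilon_x}(x) \cap \tilde{\mathcal{X}}$, I would write $f(z_2) - f(z_1) - \braket{\nabla f(z_1), z_2 - z_1}$ as the integral $\int_0^1 \braket{\nabla f(z_1 + t(z_2 - z_1)) - \nabla f(z_1), z_2 - z_1}\,\du t$, bound the inner product by Cauchy–Schwarz, apply the local Lipschitz bound to each point $z_1 + t(z_2-z_1)$, and integrate to get the $(L_x/2)\norm{z_2-z_1}_2^2$ bound. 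The point that needs care: the segment $z_1 + t(z_2-z_1)$ must stay inside $\mathcal{B}_{\varepsilon_x}(x) \cap \tilde{\mathcal{X}}$ so that the Lipschitz estimate and differentiability of $f$ apply along it. The ball is convex, so the segment stays in it; $\tilde{\mathcal{X}} = \mathcal{X} \cap \dom \nabla f \cap \dom \nabla h$ — convexity of $\mathcal{X}$ handles that factor, and I would either invoke convexity of $\dom \nabla f$ (or, safely, shrink $\varepsilon_x$ so the ball lies in $\inte \tilde{\mathcal{X}}$, which is what makes the line integral legitimate). This gives exactly \eqref{eq_local_smoothness} with $h = (1/2)\norm{\cdot}_2^2$.

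\emph{From (1) to (2).} Fix $x$ and take $L_x, \varepsilon_x$ from local relative smoothness. For $h = (1/2)\norm{\cdot}_2^2$ the inequality reads $f(z_2) \le f(z_1) + \braket{\nabla f(z_1), z_2-z_1} + (L_x/2)\norm{z_2-z_1}_2^2$ for all $z_1, z_2$ in the (convex) set $\mathcal{B}_{\varepsilon_x}(x) \cap \tilde{\mathcal{X}}$. The standard trick: since the inequality is symmetric in its use, add the copy with $z_1$ and $z_2$ swapped to the original, obtaining $\braket{\nabla f(z_1) - \nabla f(z_2), z_1 - z_2} \le L_x \norm{z_2-z_1}_2^2$, i.e. cocoercivity-type control — actually one gets the Lipschitz bound more directly by the classical argument: define $\phi(z) = (L_x/2)\norm{z}_2^2 - f(z)$, which is convex on the neighborhood by the quadratic upper bound (equivalently, $g(z) := (L_x/2)\norm{z - z_1}_2^2 + f(z_1) + \braket{\nabla f(z_1), z - z_1} - f(z)$ is nonnegative with a minimum at $z_1$), and then minimizing the quadratic upper bound in $z_2$ over the whole space yields $f(z_1) - \tfrac{1}{2L_x}\norm{\nabla f(z_2) - \nabla f(z_1)}_2^2 \le f(z_2) + \ldots$; symmetrizing gives $\norm{\nabla f(z_2) - \nabla f(z_1)}_2 \le L_x \norm{z_2 - z_1}_2$. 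The subtlety here is that the minimization-over-all-of-$\mathbb{R}^d$ step in the classical proof requires the quadratic upper bound to hold with $z_2$ ranging freely, whereas we only have it on the ball; I would fix this by choosing the working radius to be $\varepsilon_x / 3$ (say) so that for $z_1, z_2$ in the smaller ball the relevant auxiliary points stay in $\mathcal{B}_{\varepsilon_x}(x)$, or equivalently localize the classical argument by restricting attention to a small enough ball that all intermediate points remain admissible.

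\emph{Main obstacle.} No step is deep; the entire difficulty is the domain bookkeeping — making sure every auxiliary point (segment points in the integral argument, the minimizer of the quadratic surrogate in the converse) lands in $\mathcal{B}_{\varepsilon_x}(x) \cap \tilde{\mathcal{X}}$, which I expect to resolve uniformly by passing to a sufficiently small concentric ball and using convexity of $\mathcal{X}$ together with (interior of) $\dom \nabla f$. One should also note the mild mismatch between the statement's "$\inte \tilde{\mathcal{X}}$" in item (2) and "$\tilde{\mathcal{X}}$" in item (1); I would simply work on $\inte \tilde{\mathcal{X}}$ throughout, where gradients are well-defined and the line-integral calculus is valid, and remark that this is the intended reading.
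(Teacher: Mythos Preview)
Your proposal is correct and follows essentially the same route as the paper: the line-integral argument with Cauchy--Schwarz for $(2)\Rightarrow(1)$, and the Nesterov-style auxiliary-function argument (shift by a linear term so that $z_1$ becomes a minimizer, apply the quadratic upper bound at the surrogate minimizer, symmetrize, then Cauchy--Schwarz) for $(1)\Rightarrow(2)$. Your caution about the auxiliary point $z_2 - L_x^{-1}\nabla\varphi(z_2)$ possibly leaving the local ball is well placed---the paper's own proof glosses over exactly this point, so your suggestion to shrink the working radius is, if anything, a refinement.
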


The proof of Lemma \ref{lem_equiv} is standard; we give it in Appendix \ref{app_lem_equiv}. 

It is already known that the local Lipschitz gradient condition lies strictly between the following two conditions. 
\begin{enumerate}
\item The function $f$ is differentiable. 
\item The gradient of $f$ is (globally) Lipschitz. 
\end{enumerate}
See \cite{Hiriart-Urruty1984,Ioffe2002} for the details. 

The following result provides a practical approach to checking the local Lipschitz gradient condition. 

\begin{proposition} \label{prop_twice_differentiability}
Suppose that $\dom f \cap \mathcal{X} $ is relatively open in $\mathcal{X}$, and $f$ is twice continuously differentiable on $\dom f \cap \mathcal{X}$. 
Then $f$ is locally smooth relative to $h ( \cdot ) := ( 1 / 2 ) \norm{ \cdot }_2^2$ on $\mathcal{X}$. 
\end{proposition}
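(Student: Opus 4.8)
The statement is purely local, so I would fix an arbitrary $x \in \mathcal{X} \cap \dom f$ and aim to produce $L_x > 0$ and $\varepsilon_x > 0$ for which \eqref{eq_local_smoothness} holds with $h(\cdot) = (1/2)\norm{\cdot}_2^2$, i.e.\ for which $f(z_2) \le f(z_1) + \braket{\nabla f(z_1), z_2 - z_1} + (L_x/2)\norm{z_2 - z_1}_2^2$ for all $z_1, z_2 \in \mathcal{B}_{\varepsilon_x}(x) \cap \tilde{\mathcal{X}}$ (recall $D_h(z_2,z_1) = (1/2)\norm{z_2 - z_1}_2^2$ for this $h$). The first step is to cash in the hypothesis that $\dom f \cap \mathcal{X}$ is relatively open in $\mathcal{X}$: this gives some $\delta > 0$ with $\mathcal{B}_\delta(x) \cap \mathcal{X} \subseteq \dom f$. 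I would then set $\varepsilon_x := \delta/2$ and work with $K := \cl \mathcal{B}_{\varepsilon_x}(x) \cap \mathcal{X}$, which is closed and bounded, hence compact; convex, as an intersection of convex sets; and contained in $\mathcal{B}_\delta(x) \cap \mathcal{X} \subseteq \dom f \cap \mathcal{X}$, so that $f$ is twice continuously differentiable on (a neighbourhood in $\mathcal{X}$ of) $K$.

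Next I would bound the Hessian. Since $\nabla^2 f$ is continuous on $\dom f \cap \mathcal{X} \supseteq K$ and $K$ is compact, the scalar map $z \mapsto \norm{\nabla^2 f(z)}$ (operator norm induced by $\norm{\cdot}_2$, equivalently the largest eigenvalue in magnitude) is continuous on $K$ and attains a finite maximum there; take $L_x$ to be that maximum, replaced by $1$ if it happens to be $0$, so that $L_x > 0$ as the definition requires. The final step is a second-order Taylor expansion along segments. Given $z_1, z_2 \in \mathcal{B}_{\varepsilon_x}(x) \cap \tilde{\mathcal{X}} \subseteq K$, convexity of $K$ puts the whole segment $\set{ z_1 + t(z_2 - z_1) : t \in [0,1] }$ inside $K$, so $t \mapsto f(z_1 + t(z_2 - z_1))$ is $C^2$ on $[0,1]$ and the integral form of the remainder yields
\begin{equation}
f(z_2) = f(z_1) + \braket{\nabla f(z_1), z_2 - z_1} + \int_0^1 (1 - t)\, \braket{ z_2 - z_1, \nabla^2 f(z_1 + t(z_2 - z_1))(z_2 - z_1) } \, \du t . \notag
\end{equation}
Bounding the integrand by $L_x \norm{z_2 - z_1}_2^2$ and using $\int_0^1 (1-t)\,\du t = 1/2$ gives exactly the desired quadratic upper bound, hence \eqref{eq_local_smoothness} with this $L_x$ and $\varepsilon_x$. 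Since $x$ was arbitrary, $f$ is locally smooth relative to $(1/2)\norm{\cdot}_2^2$ on $\mathcal{X}$. (Equivalently, one could derive local Lipschitzness of $\nabla f$ on $\inte\tilde{\mathcal{X}}$ from the same Hessian bound via the mean value inequality and then invoke Lemma \ref{lem_equiv}.)

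The Taylor estimate and the compactness argument are routine; the only point needing care is the correct use of ``relatively open in $\mathcal{X}$'' --- it must be exploited to guarantee that an \emph{entire} closed ball around $x$, intersected with $\mathcal{X}$, stays inside $\dom f$ where $f$ is $C^2$, not merely that $x$ itself is an interior point --- together with, when $\mathcal{X}$ is not full-dimensional, reading $\nabla f$ and $\nabla^2 f$ as the ambient derivatives (consistent with the definition of $\tilde{\mathcal{X}}$) so that the one-variable Taylor expansion along segments contained in $\mathcal{X}$ is legitimate.
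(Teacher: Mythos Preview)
Your proof is correct and follows essentially the same approach as the paper's: relative openness yields a ball in $\mathcal{X}$ that stays inside $\dom f$, continuity of $\nabla^2 f$ together with compactness bounds the largest eigenvalue uniformly, and Taylor's formula with integral remainder converts this into the quadratic upper bound. If anything, you are slightly more careful than the paper in shrinking to $\varepsilon_x = \delta/2$ so that the \emph{closed} ball intersected with $\mathcal{X}$ is compact before invoking the extreme value theorem.
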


\begin{proof}
Recall the definition of relative openness: 
For every $x$ in $\dom f \cap \mathcal{X}$, there exists some $\varepsilon_x$ such that $\mathcal{B}_{\varepsilon_x} ( x ) \cap \mathcal{X} \subseteq \dom f \cap \mathcal{X}$. 
Notice that the largest eigenvalue of $\nabla^2 f$ is a continuous function on $\mathcal{B}_{\varepsilon_x} ( x ) \cap \mathcal{X}$; by the extreme value theorem, there exists some $L_x$ such that $\nabla^2 f ( z ) \leq L_x I$ for every $z \in \mathcal{B}_{\varepsilon_x} ( x ) \cap \mathcal{X}$. 
For every $z_1, z_2 \in \mathcal{B}_{\varepsilon_x} \cap \tilde{\mathcal{X}}$, we use Taylor's formula with the integral remainder and write
\begin{align}
f ( z_2 ) & = f ( z_1 ) + \braket{ \nabla f ( z_1 ), z_2 - z_1 } + \int_0^1 \int_0^t \braket{ \nabla^2 f ( z_1 + \tau ( z_2 - z_1 ) ) ( z_2 - z_1 ), z_2 - z_1 } \, \du \tau \, \du t \notag \\
& \leq f ( z_1 ) + \braket{ \nabla f ( z_1 ), z_2 - z_1 } + \int_0^1 \int_0^t L_x \norm{ z_2 - z_1 }_2^2 \, \du \tau \, \du t \notag \\
& = f ( z_1 ) + \braket{ \nabla f ( z_1 ), z_2 - z_1 } + \frac{L_x}{2} \norm{ z_2 - z_1 }_2^2 ,  \notag
\end{align}
which proves the proposition. 
\end{proof}

\begin{corollary} \label{cor_practical}
If $f$ is twice continuously differentiable on $\mathcal{X}$, then it is locally smooth relative to $h ( \cdot ) := ( 1 / 2 ) \norm{ \cdot }_2^2$ on $\mathcal{X}$. 
\end{corollary}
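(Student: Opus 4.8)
The plan is to obtain Corollary \ref{cor_practical} as an immediate specialization of Proposition \ref{prop_twice_differentiability}. The only work is to check that the single hypothesis here, twice continuous differentiability of $f$ on all of $\mathcal{X}$, already forces the two hypotheses of that proposition: that $\dom f \cap \mathcal{X}$ is relatively open in $\mathcal{X}$, and that $f$ is twice continuously differentiable on $\dom f \cap \mathcal{X}$.

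First I would record the domain bookkeeping. ``Twice continuously differentiable on $\mathcal{X}$'' means in particular that $f$ and $\nabla^2 f$ are defined at every point of $\mathcal{X}$, so $\mathcal{X} \subseteq \dom f$ and hence $\dom f \cap \mathcal{X} = \mathcal{X}$. Then relative openness of $\dom f \cap \mathcal{X}$ in $\mathcal{X}$ is automatic: for any $x \in \mathcal{X}$ and \emph{any} $\varepsilon_x > 0$ one has $\mathcal{B}_{\varepsilon_x}(x) \cap \mathcal{X} \subseteq \mathcal{X} = \dom f \cap \mathcal{X}$, which is precisely the defining inclusion for relative openness. The second hypothesis, twice continuous differentiability on $\dom f \cap \mathcal{X} = \mathcal{X}$, is the assumption verbatim. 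Invoking Proposition \ref{prop_twice_differentiability} then yields the conclusion.

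If a self-contained argument is preferred, I would simply reproduce the two-line computation from the proof of Proposition \ref{prop_twice_differentiability}: fix $x \in \mathcal{X}$, choose $\varepsilon_x > 0$ so that $\mathcal{B}_{\varepsilon_x}(x) \cap \mathcal{X}$ has compact closure (e.g. intersect with a fixed bounded ball), apply continuity of the largest eigenvalue of $\nabla^2 f$ together with the extreme value theorem to get some $L_x$ with $\nabla^2 f(z) \leq L_x I$ on that set, and then use Taylor's formula with integral remainder to derive \eqref{eq_local_smoothness} with $h(\cdot) = (1/2)\norm{\cdot}_2^2$.

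I do not anticipate any real obstacle; the corollary is essentially a one-liner. The only subtlety worth flagging is the reading of ``twice continuously differentiable on $\mathcal{X}$'' as ``$\mathcal{X} \subseteq \dom \nabla^2 f$ with $\nabla^2 f$ continuous there'', which is what makes $\dom f \cap \mathcal{X}$ collapse to $\mathcal{X}$ and renders the relative-openness hypothesis of Proposition \ref{prop_twice_differentiability} vacuous.
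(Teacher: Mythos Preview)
Your proposal is correct and matches the paper's intent: the corollary is stated without a separate proof and is meant to follow immediately from Proposition~\ref{prop_twice_differentiability}, exactly via the domain bookkeeping you describe (under the hypothesis, $\dom f \cap \mathcal{X} = \mathcal{X}$, which is trivially relatively open in $\mathcal{X}$). The self-contained argument you outline is also fine and simply repeats the proof of the proposition in this special case.
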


Indeed, under the setting of Corollary \ref{cor_practical}, the function $f$ has a bounded Hessian by the extreme value theorem, and hence is smooth relative to $h ( \cdot ) := ( 1 / 2 ) \norm{ \cdot }_2^2$, i.e., the function satisfies the standard smoothness assumption in literature \cite{Nesterov2004}; 
then most existing convergence results for first-order optimization algorithms apply. 
To derive an upper bound of the Lipschitz parameter, however, may be non-trivial. 
Moreover, there are cases where Corollary \ref{cor_practical} does not apply, while Proposition \ref{prop_twice_differentiability} is applicable. 
Below is an example. 

\begin{example} \label{exp_exp_linear}
Set $f ( x ) := - \log ( x_1 ) - \log ( x_2 )$ for every $x := ( x_1, x_2 ) \in \mathbb{R}^2$. 
Set $\mathcal{X}$ to be the positive orthant. 
Then $f$ is not twice continuously differentiable on $\mathcal{X}$; for example, $\nabla^2 f ( 1, 0 )$ does not exist. 
However, Proposition \ref{prop_twice_differentiability} is applicable---$\dom f \cap \mathcal{X}$ is relatively open in $\mathcal{X}$ as $\dom f$ is open, and it is easily checked that $f$ is twice continuously differentiable on $\dom f \cap \mathcal{X}$. 
\end{example}

Note that the local Lipschitz gradient condition is not always applicable. 

\begin{example}
Set $f( x ) := x_1 \log(x_1) + x_2 \log(x_2)$ for every $x := ( x_1, x_2 ) \in \mathbb{R}^2$, where we adopt the convention that $0 \log 0 := 0$. 
Set $\mathcal{X}$ to be the probability simplex in $\mathbb{R}^2$. 
Then $f$ is not locally smooth relative to $h ( \cdot ) := ( 1 / 2 ) \norm{ \cdot }_2^2$. 
For example, the point $x = ( 0, 1 )$ lies in $\dom f \cap \mathcal{X}$, while $\nabla f$ is unbounded around $( 0, 1 )$. 
However, it is obvious that $f$ is locally smooth relative to the negative Shannon entropy---indeed, $f$ itself is the negative Shannon entropy function. 
\end{example}

A standard setting for the mirror descent algorithm requires the following \cite{Auslender2006,Beck2003,Juditsky2012}. 

\begin{assumption} \label{assump_standard_MD}
The function $h$ is strongly convex with respect to a norm $\norm{ \cdot }$ on $\mathcal{X}$; that is, there exists some $\mu > 0$, such that 
\begin{equation}
D_h ( z_2, z_1 ) \geq \frac{\mu}{2} \norm{ z_2 - z_1 }^2, \quad \forall ( z_2, z_1 ) \in ( \dom h \cap \mathcal{X} ) \times ( \dom \nabla h \cap \mathcal{X} ) . \notag
\end{equation}
\end{assumption}

If $f$ is locally smooth relative to $h ( x ) := ( 1 / 2 ) \norm{ x }_2^2$, it is also locally smooth relative to any function $\tilde{h}$ strongly convex on $\mathcal{X}$ with respect to a norm $\norm{ \cdot }$---if for some $L > 0$ and $z_1, z_2 \in \dom \nabla \tilde{h} \times \dom \tilde{h}$, it holds that
\begin{equation}
f ( z_2 ) \leq f ( z_1 ) + \braket{ \nabla f ( z_1 ), z_2 - z_1 } + \frac{L}{2} \norm{ z_2 - z_1 }_2^2 , \notag
\end{equation}
then we have
\begin{equation}
f ( z_2 ) \leq f ( z_1 ) + \braket{ \nabla f ( z_1 ), z_2 - z_1 } + \frac{C L}{\mu} D_{\tilde{h}} ( z_2, z_1 ) , \notag
\end{equation}
for some $C > 0$ such that $\norm{ \cdot }_2 \leq C \norm{ \cdot }$, which exists because all norms on a finite-dimensional space are equivalent. 
Therefore, with Assumption \ref{assump_standard_MD}, it suffices to check for local smoothness relative to $h ( x ) := ( 1 / 2 ) \norm{ x }_2^2$. 

\begin{example}
Suppose that the constraint set $\mathcal{X}$ is the probability simplex. 
By Pinsker's inequality, the negative Shannon entropy is strongly convex on $\mathcal{X}$ with respect to the $\ell_1$-norm \cite{Csiszar2011}. 
By the discussion above and Corollary \ref{cor_practical}, any convex objective function that is twice continuously differentiable on $\mathcal{X}$ is locally smooth relative to the negative Shannon entropy. 
\end{example}

It is possible that Assumption \ref{assump_standard_MD} does not hold, while we have local relative smoothness. 

\begin{example}
Consider the function $f$ as defined in Example \ref{exp_exp_linear}. 
Set $h := f$, the Burg entropy. 
Then obviously, $f$ is smooth---and hence locally smooth---relative to $h$. 
However, if we set $\mathcal{X}$ to be the positive orthant, $h$ is not strongly convex on $\mathcal{X}$. 
\end{example}

\section{Main Result} \label{sec_main}

The main result of this paper, the following theorem, says that the mirror descent algorithm with Armijo line search is well-defined, and guaranteed to converge, given assumptions discussed above. 

\begin{theorem} \label{thm_main_MD}
Suppose that Assumption \ref{assump_MD} holds. 
Suppose that $\dom f \cap \mathcal{X} \subseteq \dom h \cap \mathcal{X}$, and $f$ is locally smooth relative to $h$. 
Then the following hold. 
\begin{enumerate}
\item The Armijo line search procedure terminates in finite steps. 
\item The sequence $\set{ f ( x_k ) }$ is non-increasing.
\item The sequence $\set{ f ( x_k ) }$ converges to $f^\star$, if $\set{ x_k }$ is bounded. 
\end{enumerate}
\end{theorem}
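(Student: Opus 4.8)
=== PROOF PROPOSAL ===

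\textbf{Overall strategy.} The plan is to treat the three claims in sequence, since each one feeds into the next. The key structural fact to exploit is that the mirror descent subproblem defining $x_{k-1}(\alpha)$ is a proximal-type step, so its optimality condition gives a ``descent-type'' inequality that compares $\braket{\nabla f(x_{k-1}), x_{k-1}(\alpha) - x_{k-1}}$ with $-D_h(x_{k-1}(\alpha), x_{k-1})/\alpha$ (and similar terms). This is the engine that both (a) guarantees the Armijo test eventually passes for small $\alpha$ and (b) gives a summable telescoping bound on function decrease.

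\textbf{Part 1: finite termination of the line search.} First I would write down the first-order optimality condition for $x := x_{k-1}(\alpha)$, namely $\braket{\alpha \nabla f(x_{k-1}) + \nabla h(x) - \nabla h(x_{k-1}), y - x} \geq 0$ for all $y \in \mathcal{X}$; plugging $y = x_{k-1}$ yields $\alpha\braket{\nabla f(x_{k-1}), x_{k-1}(\alpha) - x_{k-1}} \leq -D_h(x_{k-1}(\alpha),x_{k-1}) - D_h(x_{k-1}, x_{k-1}(\alpha)) \leq 0$. In particular the Armijo directional term is non-positive, and $x_{k-1}(\alpha) \to x_{k-1}$ as $\alpha \downarrow 0$ (continuity of the prox map, using strict convexity of $h$ and Assumption~\ref{assump_MD}). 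Then I would invoke local relative smoothness at the point $x_{k-1}$: for $\alpha$ small enough that $x_{k-1}(\alpha)$ stays in $\mathcal{B}_{\varepsilon_{x_{k-1}}}(x_{k-1}) \cap \tilde{\mathcal{X}}$, inequality \eqref{eq_local_smoothness} gives $f(x_{k-1}(\alpha)) \leq f(x_{k-1}) + \braket{\nabla f(x_{k-1}), x_{k-1}(\alpha)-x_{k-1}} + L_{x_{k-1}} D_h(x_{k-1}(\alpha), x_{k-1})$. Combining with the optimality bound $D_h(x_{k-1}(\alpha),x_{k-1}) \leq -\alpha\braket{\nabla f(x_{k-1}), x_{k-1}(\alpha)-x_{k-1}}$, one gets $f(x_{k-1}(\alpha)) \leq f(x_{k-1}) + (1 - \alpha L_{x_{k-1}})\braket{\nabla f(x_{k-1}), x_{k-1}(\alpha)-x_{k-1}}$, and since the inner product is $\leq 0$, as soon as $1 - \alpha L_{x_{k-1}} \geq \tau$, i.e. $\alpha \leq (1-\tau)/L_{x_{k-1}}$, the Armijo condition holds. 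Hence the backtracking loop terminates.

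\textbf{Part 2: monotonicity.} This is immediate once Part~1 is established: the accepted step satisfies $f(x_k) \leq f(x_{k-1}) + \tau\braket{\nabla f(x_{k-1}), x_k - x_{k-1}}$ and the inner product is non-positive by the optimality argument above, so $f(x_k) \leq f(x_{k-1})$.

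\textbf{Part 3: convergence of $f(x_k)$ to $f^\star$ under boundedness.} Here I would argue by contradiction: suppose $f(x_k) \downarrow \bar f > f^\star$ (the limit exists by monotonicity and $f^\star > -\infty$). Since $\set{x_k}$ is bounded, pass to a convergent subsequence $x_{k_j} \to \bar x$. The plan is to show the accepted step sizes $\alpha_{k_j}$ are bounded away from $0$: if some $\alpha_{k_j} < \bar\alpha$, then $\alpha_{k_j}/r$ failed the Armijo test, which combined with local relative smoothness near $\bar x$ (uniformly on a neighborhood, using that $L_x, \varepsilon_x$ can be taken fixed near $\bar x$ and a compactness/continuity argument so that nearby base points share a common $L, \varepsilon$) forces $\alpha_{k_j}/r > (1-\tau)/L$, i.e. $\alpha_{k_j} \geq r(1-\tau)/L$; otherwise $\alpha_{k_j} = \bar\alpha$. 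Either way $\alpha_{k_j} \geq \alpha_{\min} > 0$ for large $j$. Then from the Armijo inequality together with the optimality estimate $\braket{\nabla f(x_{k-1}), x_k - x_{k-1}} \leq -\tfrac{1}{\alpha_k}\big(D_h(x_k, x_{k-1}) + D_h(x_{k-1}, x_k)\big)$ we get $f(x_{k-1}) - f(x_k) \geq \tfrac{\tau}{\alpha_k} D_h(x_k, x_{k-1}) \geq \tfrac{\tau}{\bar\alpha} D_h(x_k, x_{k-1})$; summability of the left side (telescoping, bounded below) forces $D_h(x_k, x_{k-1}) \to 0$, hence $x_k - x_{k-1} \to 0$ by strict convexity of $h$ (more precisely a lower-bound argument for $D_h$ on the bounded set). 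Consequently $x_{k_j+1} \to \bar x$ as well, and $x_{k_j+1} = x_{k_j}(\alpha_{k_j})$ with $\alpha_{k_j}$ bounded in $[\alpha_{\min}, \bar\alpha]$; passing to a further subsequence $\alpha_{k_j} \to \bar\alpha_\infty > 0$ and taking limits in the optimality condition for $x_{k_j}(\alpha_{k_j})$ (using continuity of $\nabla f$, $\nabla h$ on $\tilde{\mathcal{X}}$) shows $\bar x = \bar x(\bar\alpha_\infty)$, the prox step at $\bar x$ is a fixed point. But a fixed point of the mirror descent map is exactly a point satisfying the first-order optimality condition for \eqref{eq_problem_general}, which by convexity means $f(\bar x) = f^\star$, contradicting $\bar f = f(\bar x) > f^\star$.

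\textbf{Main obstacle.} The delicate point is Part~3, specifically making the ``uniform local relative smoothness near $\bar x$'' step rigorous: Definition~\ref{def_local_smoothness} only gives pointwise $L_x, \varepsilon_x$, and one needs that for base points $z_1$ in a small neighborhood of $\bar x$, the inequality \eqref{eq_local_smoothness} holds with a \emph{common} constant and radius — this requires a short covering/continuity argument (or re-deriving it from the structure, e.g. via Proposition~\ref{prop_twice_differentiability}-type reasoning in the twice-differentiable case). The second subtle point is converting $D_h(x_k,x_{k-1}) \to 0$ into $x_k - x_{k-1} \to 0$ without a global strong-convexity assumption on $h$; this needs the boundedness of $\set{x_k}$ plus a lower semicontinuity / coercivity property of $D_h(\cdot,\cdot)$ on compact subsets of $\tilde{\mathcal{X}}$, which should follow from strict convexity of $h$ together with the standing assumptions, but must be handled with care near the relative boundary of $\dom h$.
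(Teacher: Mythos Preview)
Your Parts 1 and 2 are essentially identical to the paper's (Lemma \ref{lem_finite_termination} and Corollary \ref{cor_monotonicity}). One remark: your first flagged obstacle is a non-issue, because Definition \ref{def_local_smoothness} already gives inequality \eqref{eq_local_smoothness} uniformly for \emph{all} $z_1, z_2 \in \mathcal{B}_{\varepsilon_x}(x) \cap \tilde{\mathcal{X}}$. So once you check $\bar x \in \mathcal{X} \cap \dom f$ (closedness of $\mathcal{X}$ plus lower semicontinuity of $f$), applying the definition at $\bar x$ immediately covers all nearby base points with the common constants $L_{\bar x}, \varepsilon_{\bar x}$; no covering argument is needed. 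The paper's Lemma \ref{lem_lower_bound} uses exactly this.

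For Part 3 the paper takes a different route. It also proves (as you do) that the step sizes cannot have $\liminf \alpha_k = 0$, hence $\sum_k \alpha_k = +\infty$ (Lemma \ref{lem_lower_bound}). But instead of a fixed-point argument it feeds this into a standard mirror-descent estimate (Theorem \ref{thm_base}): using the three-point identity on the optimality condition for $x_k$, one obtains for any $u \in \mathcal{X} \cap \dom f$ the bound
\[
\alpha_k\bigl[f(x_{k-1}) - f(u)\bigr] \leq D_h(u, x_{k-1}) - D_h(u, x_k) - \alpha_k \braket{\nabla f(x_{k-1}), x_k - x_{k-1}} ,
\]
which telescopes; combining $\sum \alpha_k = \infty$ with $\braket{\nabla f(x_{k-1}), x_k - x_{k-1}} \to 0$ (from Armijo and $f(x_0) - f^\star < \infty$) and a Ces\`aro-type lemma gives $f(x_n) \to f(u)$. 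The crucial feature is that the $\nabla h$-terms disappear into the telescoping sum $D_h(u,x_{k-1}) - D_h(u,x_k)$; no limit of $\nabla h(x_k)$ is ever taken.

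Your fixed-point argument, by contrast, requires passing to the limit in the optimality condition for $x_{k_j}(\alpha_{k_j})$ and then defining $\bar x(\bar\alpha_\infty)$. Both steps need $\bar x \in \tilde{\mathcal{X}} = \mathcal{X} \cap \dom \nabla f \cap \dom \nabla h$ and continuity of $\nabla h$ at $\bar x$, which the hypotheses do not guarantee. In the prototypical case where $h$ is the negative Shannon (or von Neumann) entropy, $\nabla h$ blows up on the relative boundary of the simplex, and nothing in the assumptions prevents the cluster point $\bar x$ from lying there. This is a genuine gap, and your second flagged obstacle ($D_h(x_k, x_{k-1}) \to 0 \Rightarrow \norm{x_k - x_{k-1}} \to 0$ without strong convexity) is another symptom of the same boundary difficulty. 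The paper's telescoping approach sidesteps all of this.
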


Boundedness of the sequence $\set{ x_k }$ holds, for example, when the constraint set $\mathcal{X}$ or level set $\set{ x \in \mathcal{X} | f ( x ) \leq f ( x_0 ) }$ is bounded. 
A sufficient condition for the latter case is \emph{coercivity}---a function is called coercive, if for every sequence $\set{ x_k }$ such that $\norm{ x_k } \to + \infty$, we have $f ( x_k ) \to + \infty$ (see, e.g., \cite{Bauschke2011}). 

\section{Proof of Theorem \ref{thm_main_MD}}

The proof of Theorem \ref{thm_main_MD} stems from standard arguments (see, e.g., \cite{Auslender2006}), showing that the mirror descent algorithm converges, as long as the step sizes $\alpha_k$ are bounded away from zero. 
However, without any global parameter of the objective function, we are not able to provide an explicit lower bound for all step sizes as in \cite{Auslender2006}. 
We solve this difficulty by proving the \emph{existence} of a strictly positive lower bound, for \emph{all but a finite number} of the step sizes. 

The following result shows that for every $x \in \tilde{\mathcal{X}}$, $x( \alpha )$ can be arbitrarily close to $x$ by setting $\alpha$ very small. 
This result is so fundamental in our analysis that we will use it without explicitly mentioning it. 

\begin{lemma}
The function $x( \alpha )$ is continuous in $\alpha$ for every $x \in \tilde{\mathcal{X}}$. 
\end{lemma}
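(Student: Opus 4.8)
The plan is to view $x(\alpha)$ as the unique minimizer of the function $\phi_\alpha(z) := \alpha \braket{ \nabla f(x), z - x } + D_h(z, x)$ over $\mathcal{X}$, and to establish continuity in $\alpha$ by a standard argument combining boundedness of a minimizing trajectory with uniqueness of the minimizer. Fix $x \in \tilde{\mathcal{X}}$ and a target parameter $\alpha_0 \geq 0$; I want to show $x(\alpha) \to x(\alpha_0)$ as $\alpha \to \alpha_0$. First I would take any sequence $\alpha_n \to \alpha_0$ and consider the corresponding iterates $x_n := x(\alpha_n)$, all of which lie in $\tilde{\mathcal{X}}$ by Assumption \ref{assump_MD}.

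The first substantive step is to argue that $\set{ x_n }$ is bounded. This follows from the optimality of $x_n$ for $\phi_{\alpha_n}$: comparing against the fixed feasible point $x$ gives $\phi_{\alpha_n}(x_n) \leq \phi_{\alpha_n}(x) = 0$ (using $D_h(x,x) = 0$), hence $D_h(x_n, x) \leq - \alpha_n \braket{ \nabla f(x), x_n - x } \leq \alpha_n \norm{ \nabla f(x) }_* \norm{ x_n - x }$. Since the $\alpha_n$ are bounded and $D_h(\cdot, x)$ has super-linear growth along any unbounded sequence — because $h$ is strictly convex, so $D_h(\cdot,x)$ is nonnegative, vanishes only at $x$, and cannot be dominated by a linear term off a bounded set (here I would invoke the coercivity-type behavior of the Bregman divergence, or if needed restrict to the case of interest where $\mathcal{X}$ or the relevant sublevel set is bounded) — the inequality forces $\norm{ x_n - x }$ to stay bounded. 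Passing to a subsequence, $x_n \to \tilde{x}$ for some $\tilde{x}$ in the closure of $\mathcal{X}$.

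The second step is to identify the limit. Using lower semicontinuity of $D_h(\cdot, x)$ and continuity of the linear term, for any fixed $z \in \mathcal{X}$ we pass to the limit in $\phi_{\alpha_n}(x_n) \leq \phi_{\alpha_n}(z)$, and the dependence $\alpha_n \to \alpha_0$ is harmless since the map $(\alpha, \text{value}) \mapsto \alpha \braket{\nabla f(x), z-x}$ is jointly continuous; this yields $\phi_{\alpha_0}(\tilde{x}) \leq \phi_{\alpha_0}(z)$ for all $z \in \mathcal{X}$, so $\tilde{x}$ minimizes $\phi_{\alpha_0}$ over $\mathcal{X}$. By Assumption \ref{assump_MD} the minimizer is unique, so $\tilde{x} = x(\alpha_0)$. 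Since every subsequence of $\set{ x_n }$ has a further subsequence converging to the same limit $x(\alpha_0)$, the whole sequence converges, establishing continuity.

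The step I expect to be the main obstacle is the boundedness/closedness bookkeeping in the first two steps: specifically, guaranteeing that the limit point $\tilde{x}$ actually lies in $\mathcal{X}$ (which is closed, so this is fine once boundedness is in hand) and, more delicately, handling the possibility that $\dom \nabla h$ fails to be closed so that $\tilde{x}$ might escape $\tilde{\mathcal{X}}$ — here Assumption \ref{assump_MD} is exactly what rescues us, since it asserts $x(\alpha_0) \in \tilde{\mathcal{X}}$ outright. The lower semicontinuity argument for $D_h(\cdot, x)$ near the boundary of $\dom h$ is the one place where a little care with the convex-analysis technicalities is warranted; everything else is routine.
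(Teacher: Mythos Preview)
Your argument is correct in outline and is essentially the hands-on version of what the paper does in one line: the paper simply invokes Theorem~7.41 of Rockafellar and Wets, \emph{Variational Analysis}, a general result on continuity of argmin mappings for parametric minimization problems under level-boundedness and lower-semicontinuity hypotheses. Your compactness--uniqueness argument unpacks precisely the mechanism behind that theorem in this specific setting: bound the minimizing sequence, extract a convergent subsequence, pass to the limit using lower semicontinuity of $D_h(\cdot,x)$ and continuity of the linear term in $\alpha$, and identify the limit via the uniqueness guaranteed by Assumption~\ref{assump_MD}. The advantage of your route is that it is self-contained and makes explicit the one real technical point---boundedness of $\set{x_n}$---which the paper's citation absorbs into the (unverified) hypotheses of the black-box result. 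Your caveat there is well placed: strict convexity of $h$ alone does \emph{not} force super-linear growth of $D_h(\cdot,x)$ (e.g.\ $h(t)=\sqrt{1+t^2}$), so you genuinely need either strong convexity (Assumption~\ref{assump_standard_MD}), compactness of $\mathcal{X}$, or some coercivity of $h$; in the paper's applications $\mathcal{X}$ is compact, so the issue is moot, but in full generality both your proof and the paper's one-line citation would need this hypothesis checked.
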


\begin{proof}
Apply Theorem 7.41 in \cite{Rockafellar2009}. 
\end{proof}

For ease of presentation, we put the proofs of some technical lemmas in Section \ref{app_auxiliary_MD}. 

\subsection{Proof of Statement 1} 

Statement 1 follows from the following lemma. 

\begin{lemma} \label{lem_finite_termination}
For every $x \in \tilde{\mathcal{X}}$, there exists some $\alpha_x > 0$, such that 
\begin{equation}
f ( x ( \alpha ) ) \leq f ( x ) + \tau \braket{ \nabla f ( x ), x ( \alpha ) - x } , \quad \forall \alpha \in ( 0, \alpha_x ] . \label{eq_armijo_to_be_proved}
\end{equation}
\end{lemma}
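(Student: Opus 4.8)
The plan is to combine the local relative smoothness of $f$ at the point $x$ with the continuity of the map $\alpha \mapsto x(\alpha)$ and the elementary optimality properties of the mirror descent update. Fix $x \in \tilde{\mathcal{X}}$. By local relative smoothness (Definition \ref{def_local_smoothness}) there are $L_x > 0$ and $\varepsilon_x > 0$ such that \eqref{eq_local_smoothness} holds for all $z_1, z_2 \in \mathcal{B}_{\varepsilon_x}(x) \cap \tilde{\mathcal{X}}$. By the continuity lemma just stated, there is some $\alpha'_x > 0$ such that $x(\alpha) \in \mathcal{B}_{\varepsilon_x}(x) \cap \tilde{\mathcal{X}}$ for all $\alpha \in [0, \alpha'_x]$ (using Assumption \ref{assump_MD} to know $x(\alpha) \in \tilde{\mathcal{X}}$, and $x(0) = x$). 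Applying \eqref{eq_local_smoothness} with $z_1 = x$ and $z_2 = x(\alpha)$ gives, for every such $\alpha$,
\begin{equation}
f(x(\alpha)) \leq f(x) + \braket{\nabla f(x), x(\alpha) - x} + L_x D_h(x(\alpha), x). \notag
\end{equation}

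Next I would extract from the definition of the mirror descent update a bound on $D_h(x(\alpha), x)$ in terms of the linear term. Since $x(\alpha)$ minimizes $\alpha \braket{\nabla f(x), \cdot - x} + D_h(\cdot, x)$ over $\mathcal{X}$, comparing its value against the feasible point $x$ (whose objective value is $0$) yields
\begin{equation}
\alpha \braket{\nabla f(x), x(\alpha) - x} + D_h(x(\alpha), x) \leq 0, \notag
\end{equation}
hence $D_h(x(\alpha), x) \leq -\alpha \braket{\nabla f(x), x(\alpha) - x}$, and in particular $\braket{\nabla f(x), x(\alpha) - x} \leq 0$. Substituting this into the relative-smoothness inequality gives
\begin{equation}
f(x(\alpha)) \leq f(x) + \braket{\nabla f(x), x(\alpha) - x} - \alpha L_x \braket{\nabla f(x), x(\alpha) - x} = f(x) + (1 - \alpha L_x)\braket{\nabla f(x), x(\alpha) - x}. \notag
\end{equation}

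Now set $\alpha_x := \min\{\alpha'_x, (1 - \tau)/L_x\}$. For $\alpha \in (0, \alpha_x]$ we have $1 - \alpha L_x \geq \tau$, and since $\braket{\nabla f(x), x(\alpha) - x} \leq 0$, multiplying the (non-positive) inner product by the larger coefficient $1 - \alpha L_x \geq \tau$ only decreases the right-hand side, so
\begin{equation}
f(x(\alpha)) \leq f(x) + (1 - \alpha L_x)\braket{\nabla f(x), x(\alpha) - x} \leq f(x) + \tau \braket{\nabla f(x), x(\alpha) - x}, \notag
\end{equation}
which is exactly \eqref{eq_armijo_to_be_proved}. Then Statement 1 follows because $r^j \bar{\alpha} \to 0$, so eventually $r^j \bar{\alpha} \leq \alpha_{x_{k-1}}$ and the Armijo condition is met; the while-loop terminates.

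The only genuinely delicate point is ensuring $x(\alpha)$ stays inside the neighborhood $\mathcal{B}_{\varepsilon_x}(x) \cap \tilde{\mathcal{X}}$ for small $\alpha$ — this is where continuity of $\alpha \mapsto x(\alpha)$ and Assumption \ref{assump_MD} (which keeps iterates in $\tilde{\mathcal{X}}$) are both needed; everything else is the standard descent-lemma manipulation specialized to the Bregman setting. One subtlety to state carefully: the sign argument $\braket{\nabla f(x), x(\alpha) - x} \le 0$ must be in hand before replacing the coefficient $1 - \alpha L_x$ by $\tau$, since the inequality direction flips with the sign of the inner product; if $x(\alpha) = x$ both sides are equal and the claim is trivial.
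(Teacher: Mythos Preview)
Your proof is correct and follows essentially the same approach as the paper: both combine the local relative smoothness inequality with the elementary optimality bound $D_h(x(\alpha),x)\le -\alpha\langle\nabla f(x),x(\alpha)-x\rangle$ (the paper packages this as Lemma~\ref{lem_monotonicity}) to arrive at the same threshold $\alpha\le(1-\tau)/L_x$. The only cosmetic difference is that you make the continuity step---ensuring $x(\alpha)\in\mathcal{B}_{\varepsilon_x}(x)$ for small $\alpha$---explicit via an extra constant $\alpha'_x$, whereas the paper announces beforehand that continuity of $\alpha\mapsto x(\alpha)$ will be used without mention and writes only $\alpha_x=L_x^{-1}(1-\tau)$.
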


\begin{proof}
We write \eqref{eq_armijo_to_be_proved} equivalently as
\begin{equation}
f ( x( \alpha ) ) - \left[ f ( x ) + \braket{ \nabla f ( x ), x( \alpha ) - x } \right] \leq - ( 1 - \tau ) \braket{ \nabla f ( x ), x( \alpha ) - x } , \quad \forall \alpha \in ( 0, \alpha_x ] . \notag
\end{equation}
By the local relative smoothness condition, it suffices to check 
\begin{equation}
L_x D_h ( x( \alpha ), x ) \leq - ( 1 - \tau ) \braket{ \nabla f ( x ), x ( \alpha ) - x }, \quad \forall \alpha \in ( 0, \alpha_x ] . \notag
\end{equation}
By Lemma \ref{lem_monotonicity}, it suffices to check 
\begin{equation}
\alpha L_x D_h ( x( \alpha ), x ) \leq ( 1 - \tau ) D_h ( x( \alpha ), x ), \quad \forall \alpha \in ( 0, \alpha_x ] . \notag
\end{equation}
If $D_h ( x ( \alpha ), x ) > 0$, it suffices to set $\alpha_x = L_x^{-1} ( 1 - \tau )$. 
Otherwise, we have $x = x( \alpha )$; then Lemma \ref{lem_fixed_point_MD} implies that $x$ is a minimizer, and Lemma \ref{lem_finite_termination} follows with any $\alpha_x > 0$. 
\end{proof}

\subsection{Proof of Statements 2 and 3} 

We start with the following known result. 

\begin{theorem} \label{thm_base}
Let $\set{ x_k }$ be a sequence in $\tilde{\mathcal{X}}$. 
Suppose that the assumptions in Theorem \ref{thm_main_MD} hold. 
Then the sequence $\set{ f ( x_k ) }$ monotonically converges to $f^\star$, if the following hold. 
\begin{enumerate}
\item There exists some $\tau \in ( 0, 1 )$, such that 
\begin{equation}
f ( x_k ) \leq f ( x_{k - 1} ) + \tau \braket{ \nabla f ( x_{k - 1} ), x_k - x_{k - 1} }, \quad \forall k \in \mathbb{N} . \notag
\end{equation}
\item The sum of step sizes diverges, i.e., $\sum_{k = 1}^\infty \alpha_k = + \infty$. 
\end{enumerate}
\end{theorem}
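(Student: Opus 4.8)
The plan is to prove Theorem~\ref{thm_base} by a standard telescoping/summation argument over the sufficient-decrease inequality, combined with a lower bound on the per-step progress coming from the mirror-descent three-point identity. The key object is the quantity $\braket{ \nabla f ( x_{k-1} ), x_k - x_{k-1} }$ appearing in hypothesis~1; I want to show it is sufficiently negative, so negative in fact that summing the decreases forces $\inf_k f(x_k) = f^\star$.

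\textbf{Step 1: lower-bound the decrease via the optimality condition.} Since $x_k = x_{k-1}(\alpha_k)$ solves $\argmin_x \{ \alpha_k \braket{\nabla f(x_{k-1}), x - x_{k-1}} + D_h(x, x_{k-1}) \mid x \in \mathcal{X} \}$, the first-order optimality condition gives, for every $u \in \mathcal{X}$,
\begin{equation}
\alpha_k \braket{ \nabla f ( x_{k-1} ), x_k - u } \leq \braket{ \nabla h(x_{k-1}) - \nabla h(x_k), x_k - u } = D_h(u, x_{k-1}) - D_h(u, x_k) - D_h(x_k, x_{k-1}) , \notag
\end{equation}
using the three-point identity for the Bregman divergence. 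Taking $u = x_{k-1}$ yields $\alpha_k \braket{\nabla f(x_{k-1}), x_k - x_{k-1}} \leq - D_h(x_k, x_{k-1}) \leq 0$; this is exactly the monotonicity content (Lemma~\ref{lem_monotonicity}), and combined with hypothesis~1 it already proves that $\{f(x_k)\}$ is non-increasing. Taking instead $u = x^\star$ an optimal point (or, if none exists, any point with $f(u)$ close to $f^\star$, handled by a limiting argument) and using convexity $f(x_{k-1}) - f(u) \leq \braket{\nabla f(x_{k-1}), x_{k-1} - u}$, I get
\begin{equation}
\alpha_k \bigl( f(x_{k-1}) - f^\star \bigr) \leq \alpha_k \braket{\nabla f(x_{k-1}), x_{k-1} - x_k} + D_h(x^\star, x_{k-1}) - D_h(x^\star, x_k) . \notag
\end{equation}

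\textbf{Step 2: telescope.} Summing this from $k = 1$ to $N$, the Bregman terms telescope to $D_h(x^\star, x_0) - D_h(x^\star, x_N) \leq D_h(x^\star, x_0)$, while hypothesis~1 bounds $\alpha_k \braket{\nabla f(x_{k-1}), x_{k-1} - x_k} \leq (1/\tau)\bigl(f(x_{k-1}) - f(x_k)\bigr)$, which also telescopes to $(1/\tau)\bigl(f(x_0) - f(x_N)\bigr) \leq (1/\tau)\bigl(f(x_0) - f^\star\bigr)$. Hence
\begin{equation}
\Bigl( \sum_{k=1}^N \alpha_k \Bigr) \Bigl( \min_{1 \le k \le N} f(x_{k-1}) - f^\star \Bigr) \leq \sum_{k=1}^N \alpha_k \bigl( f(x_{k-1}) - f^\star \bigr) \leq \frac{f(x_0) - f^\star}{\tau} + D_h(x^\star, x_0) , \notag
\end{equation}
where the first inequality uses that all summands $f(x_{k-1}) - f^\star \geq 0$. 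Since $\sum_k \alpha_k = +\infty$ by hypothesis~2, the right-hand side being finite forces $\liminf_k f(x_k) = f^\star$; monotonicity (Step~1) then upgrades this to $\lim_k f(x_k) = f^\star$.

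\textbf{The main obstacle} I anticipate is the possibility that \eqref{eq_problem_general} has no minimizer attained in $\mathcal{X}$, so that there is no $x^\star$ with $D_h(x^\star, x_0) < \infty$ to plug in. The clean fix is to run the argument with an arbitrary feasible $u \in \dom h \cap \mathcal{X}$: the telescoped inequality becomes $\bigl(\sum \alpha_k\bigr)\bigl(\min_k f(x_{k-1}) - f(u)\bigr) \leq \tau^{-1}(f(x_0) - f^\star) + D_h(u, x_0)$, so $\liminf_k f(x_k) \leq f(u)$ for every such $u$; taking an infimizing sequence $u$ and invoking $\dom f \cap \mathcal{X} \subseteq \dom h \cap \mathcal{X}$ to ensure $D_h(u, x_0) < \infty$ gives $\liminf_k f(x_k) \leq f^\star$, and the reverse inequality is trivial. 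A secondary point to be careful about is that $x_0 \in \tilde{\mathcal{X}}$ and Assumption~\ref{assump_MD} keep all iterates in $\tilde{\mathcal{X}}$, so every gradient and Bregman term above is well-defined; and the three-point identity requires $x_{k-1} \in \dom \nabla h$, again guaranteed by Assumption~\ref{assump_MD}.
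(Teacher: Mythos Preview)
Your proof follows the same skeleton as the paper's: convexity plus the mirror-descent optimality condition and three-point identity give a per-step inequality, which after summation and use of hypothesis~1 controls $f(x_N) - f(u)$ by $S_N^{-1}$ times a bounded quantity. Your handling of the ``no minimizer'' case via an arbitrary $u \in \dom f \cap \mathcal{X}$ is exactly what the paper does.

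There is one slip in Step~2. Hypothesis~1 gives
\[
\braket{\nabla f(x_{k-1}), x_{k-1} - x_k} \;\leq\; \tau^{-1}\bigl(f(x_{k-1}) - f(x_k)\bigr),
\]
\emph{without} the factor $\alpha_k$ on the left; so as written your bound on $\sum_k \alpha_k \braket{\nabla f(x_{k-1}), x_{k-1} - x_k}$ is unjustified. The fix is immediate in the Armijo setting: since $\alpha_k \leq \bar{\alpha}$, multiply through by $\alpha_k$ and telescope to get the bound $(\bar{\alpha}/\tau)(f(x_0) - f^\star)$, after which your conclusion goes through unchanged.

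The paper's proof differs precisely at this point. Rather than use boundedness of $\alpha_k$, it notes from hypothesis~1 that the nonnegative terms $a_k := -\braket{\nabla f(x_{k-1}), x_k - x_{k-1}}$ have a finite (unweighted) sum, hence $a_k \to 0$; it then invokes a Ces\`aro-type lemma (from Polyak) asserting that if $a_k \to 0$ and $S_N \to \infty$ then $S_N^{-1}\sum_{k\le N}\alpha_k a_k \to 0$. Your route is a touch more elementary but leans on $\alpha_k \leq \bar{\alpha}$; the paper's route costs one extra lemma but yields the statement for arbitrary positive step sizes satisfying conditions~1 and~2.
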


Theorem \ref{thm_base} is essentially a restatement of Theorem 4.1 in \cite{Auslender2006}. 
We give a proof in Appendix \ref{app_thm_base} for completeness. 

The first condition in Theorem \ref{thm_main_MD} is automatically satisfied by the definition of Armijo line search. 
The second condition is verified by the following lemma. 

\begin{lemma} \label{lem_lower_bound}
Suppose that the assumptions in Theorem \ref{thm_main_MD} hold. 
If none of the iterates is a solution to \eqref{eq_problem_general}, it holds that $\sum_{k = 1}^{\infty} \alpha_k = + \infty$. 
\end{lemma}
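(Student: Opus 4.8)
The plan is to argue by contradiction: suppose $\sum_{k=1}^\infty \alpha_k < +\infty$, so that $\alpha_k \to 0$. The strategy is to show that the iterates must then cluster, and that near a cluster point the local relative smoothness constant forces the Armijo line search to accept a step size bounded away from zero, contradicting $\alpha_k \to 0$. First I would use the monotone convergence of $\set{f(x_k)}$ (Statement 2, already available from the Armijo condition together with $\braket{\nabla f(x_{k-1}), x_k - x_{k-1}} \le 0$, which follows from Lemma \ref{lem_fixed_point_MD}-type reasoning) to deduce that $f(x_k)$ is bounded; combined with the hypothesis that the sequence is, in the relevant regime, confined to a region where we can extract a convergent subsequence $x_{k_j} \to \bar x \in \tilde{\mathcal X}$. (If the lemma is meant to be read together with the boundedness hypothesis of Theorem \ref{thm_main_MD}, Statement 3, then boundedness of $\set{x_k}$ is given and Bolzano--Weierstrass applies directly; otherwise one extracts the cluster point from the fact that the $D_h(x_k, x_{k-1})$ terms are summable, which I discuss below.)

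The core of the argument is a uniform lower bound on the accepted step sizes along the convergent subsequence. By local relative smoothness at $\bar x$, there are $L_{\bar x} > 0$ and $\varepsilon_{\bar x} > 0$ such that \eqref{eq_local_smoothness} holds on $\mathcal{B}_{\varepsilon_{\bar x}}(\bar x) \cap \tilde{\mathcal X}$. For $j$ large, $x_{k_j}$ lies in $\mathcal{B}_{\varepsilon_{\bar x}/2}(\bar x)$; since $x_{k_j}(\alpha)$ is continuous in $\alpha$ with $x_{k_j}(0) = x_{k_j}$, there is a threshold $\bar\alpha_j > 0$ below which $x_{k_j}(\alpha)$ stays in $\mathcal{B}_{\varepsilon_{\bar x}}(\bar x) \cap \tilde{\mathcal X}$, and one would like this threshold to be uniform in $j$. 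This is where Lemma \ref{lem_monotonicity} enters: it controls $D_h(x(\alpha), x)$ by $\alpha^{-1}$ times $-\braket{\nabla f(x), x(\alpha)-x}$ (the same inequality used in the proof of Lemma \ref{lem_finite_termination}), which in turn is controlled once $D_h$ is bounded; so a uniform radius bound can be bootstrapped. Then, exactly as in Lemma \ref{lem_finite_termination}, on $\mathcal{B}_{\varepsilon_{\bar x}}(\bar x) \cap \tilde{\mathcal X}$ the Armijo test is satisfied for every $\alpha \le L_{\bar x}^{-1}(1-\tau)$; hence the backtracking loop accepts $\alpha_{k_j} \ge r \, L_{\bar x}^{-1}(1-\tau) \wedge \bar\alpha$, a constant independent of $j$. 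This contradicts $\alpha_{k_j} \to 0$ (which follows from $\sum \alpha_k < \infty$), completing the proof.

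For the case where boundedness of $\set{x_k}$ is not assumed a priori, I would instead produce the cluster point from the algorithm's own progress. From the Armijo inequality and the three-point identity for $D_h$, one gets $f(x_{k-1}) - f(x_k) \ge \tau(-\braket{\nabla f(x_{k-1}), x_k - x_{k-1}}) \ge \tau \alpha_k^{-1} D_h(x_k, x_{k-1})$ is the wrong direction for summability; the cleaner route is that $\set{f(x_k)}$ is monotone and bounded below by $f^\star > -\infty$, hence Cauchy, so $-\braket{\nabla f(x_{k-1}), x_k - x_{k-1}} \to 0$, and Lemma \ref{lem_monotonicity} then gives $D_h(x_k, x_{k-1}) \le \alpha_k \cdot(-\braket{\nabla f(x_{k-1}), x_k - x_{k-1}}) \to 0$. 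Under the ambient hypotheses this pins down the geometry near a cluster point well enough to rerun the argument above.

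The main obstacle I expect is making the radius threshold $\bar\alpha_j$ uniform in $j$ — i.e., ruling out that $x_{k_j}(\alpha)$ escapes the ball $\mathcal{B}_{\varepsilon_{\bar x}}(\bar x)$ for arbitrarily small $\alpha$ as $j$ grows. This requires an equicontinuity-type control of $\alpha \mapsto x_{k_j}(\alpha)$ near $\alpha = 0$, uniform over the subsequence, which is precisely what Lemma \ref{lem_monotonicity} (bounding $D_h(x(\alpha),x)$ and hence, via strict convexity of $h$, the displacement $\norm{x(\alpha)-x}$) should be designed to deliver. Everything else is a re-packaging of the finite-termination argument already carried out in Lemma \ref{lem_finite_termination}.
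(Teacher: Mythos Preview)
Your proposal is correct and follows essentially the same approach as the paper: contradiction via a vanishing subsequence of step sizes, extraction of a cluster point $\bar x$ (the paper calls it $x_\infty$) using boundedness of $\{x_k\}$, and then re-running the Lemma~\ref{lem_finite_termination} computation with the local constant $L_{\bar x}$ to force $\alpha_k \ge r(1-\tau)/L_{\bar x}$ along the subsequence. The paper works from the failed trial step $r^{-1}\alpha_k$ and starts the contradiction from $\liminf_k \alpha_k = 0$ rather than $\sum_k \alpha_k < \infty$, but the mechanics are identical.

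Two minor remarks. First, the paper simply assumes the boundedness of $\{x_k\}$ from Statement~3 of Theorem~\ref{thm_main_MD}, so your alternate route to a cluster point is not needed here. Second, the uniformity issue you flag as ``the main obstacle'' --- showing that $x_{k_j}(r^{-1}\alpha_{k_j})$ eventually lies in $\mathcal{B}_{\varepsilon_{\bar x}}(\bar x)$ so that the local smoothness inequality may be applied --- is genuinely present, and the paper's proof glosses over it (it jumps directly from the failed Armijo inequality to the local smoothness bound without verifying that both points sit in the ball). Your instinct that Lemma~\ref{lem_monotonicity} together with continuity of $\alpha \mapsto x(\alpha)$ is the right tool to close this gap is on target.
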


\begin{proof}
We prove by contradiction. 
Suppose that $\liminf \set{ \alpha_k } = 0$. 
Then there exists a sub-sequence $\set{ \alpha_k | k \in \mathcal{K} \subseteq \mathbb{N} }$ converging to zero. 
By the boundedness of $\set{ x_k }$, there exists a sub-sequence $\set{ x_k | k \in \mathcal{K}' - 1 }$ converging to a limit point $x_\infty$, for some $\mathcal{K}' \subseteq \mathcal{K}$. 
Notice that $\set{ \alpha_k | k \in \mathcal{K}' }$ converges to zero. 
For large enough $k \in \mathcal{K}' - 1$, we have
\begin{equation}
f ( x_{k - 1} ( r^{-1} \alpha_k ) ) > f ( x_{k - 1} ) + \tau \braket{ \nabla f ( x_{k - 1} ), x_{k - 1} ( r^{-1} \alpha_k ) - x_{k - 1} } , \notag
\end{equation}
which implies 
\begin{align}
& f ( x_{k - 1} ( r^{-1} \alpha_k ) ) - \left[ f ( x_{k - 1} ) + \braket{ \nabla f ( x_{k - 1} ), x_{k - 1} ( r^{-1} \alpha_k ) - x_{k - 1} } \right] \notag \\
& \quad > - ( 1 - \tau ) \braket{ \nabla f ( x_{k - 1} ), x_{k - 1} ( r^{-1} \alpha_k ) - x_{k - 1} } . \notag
\end{align}
By the local relative smoothness condition and Lemma \ref{lem_monotonicity}, we write 
\begin{equation}
r^{-1} \alpha_k L_{x_\infty} D_h ( x_{k - 1} ( r^{-1} \alpha_k ), x_{k - 1} ) > ( 1 - \tau ) D_h ( x_{k - 1} ( r^{-1} \alpha_k ), x_{k - 1} ) . \notag
\end{equation}
If $x_{k - 1} ( r^{-1} \alpha_k ) \neq x_{k - 1}$, we get
\begin{equation}
\alpha_k > \frac{r ( 1 - \tau )}{L_{x_\infty}} , \notag
\end{equation}
a contradiction. 
Therefore, $\liminf \set{ \alpha_k }$ is strictly positive, and the lemma follows. 
\end{proof}

\begin{proof}[Proof of Statements 2 and 3 of Theorem \ref{thm_main_MD}]
If none of the iterates is a solution to \eqref{eq_problem_general}, Theorem \ref{thm_base} and Lemma \ref{lem_lower_bound} imply that the sequence $\set{ f ( x_k ) }$ converges to $f^\star$. 
Otherwise, if $x_k$ is a solution, Lemma \ref{lem_fixed_point_MD} implies that $x_{k'} = x_k$ for every $k' > k$. 
Monotonicity of the sequence $\set{ f ( x_k ) }$ follows from Corollary \ref{cor_monotonicity} in Section \ref{app_auxiliary_MD}. 
\end{proof}

\section{Numerical Results} \label{sec_numerical}

We illustrate applications of Theorem \ref{thm_main_MD} in this section. 

\subsection{Portfolio Selection}

Consider long-term investment in a market of $d$ stocks under the discrete-time setting. 
At the beginning of the $t$-th day, $t \in \mathbb{N}$, the investor distributes his total wealth to the stocks following a vector $x_t$ in the probability simplex $\mathcal{P} \subset \mathbb{R}^d$. 
Denote the price relatives---(possibly negative) returns the investor would receive at the end of the day with one-dollar investment---of the stocks by a vector $a_t \in [ 0, + \infty )^d$. 
Then, if the investor has one dollar at the beginning of the first day, the wealth at the end of the $t$-th day is $\Pi_{i = 1}^t \braket{ a_i, x_i }$. 
For every $t \in \mathbb{N}$, the \emph{best constant rebalanced portfolio} $x_t^\star$ up to the $t$-th day is defined as a solution of the optimization problem \cite{Cover1991}
\begin{equation}
x^\star \in \argmin_x \Set{ - \sum_{i = 1}^t \log \braket{ a_i, x } | x \in \mathcal{P} }. \tag{BCRP} \label{eq_bcrp}
\end{equation}
The wealth incurred by the best constant rebalanced portfolio is a benchmark for on-line portfolio selection algorithms \cite{Cover1991,Cover1996,Hazan2015}. 

Denote the objective function in \eqref{eq_bcrp} by $f_{\text{BCRP}}$. 
As $f_{\text{BCRP}}$ is simply a vector analogue of $f_{\text{QST}}$, most existing convergence guarantees in convex optimization does not hold. 
The optimization problem \eqref{eq_bcrp} was addressed by an expectation-maximization (EM)-type method developed by Cover \cite{Cover1984}. 
Given an initial iterate $x_0 \in \mathcal{P} \cap \dom ( f_{\text{BCRP}} )$, Cover's algorithm iterates as
\begin{equation}
x_k = - x_{k - 1} \cdot \nabla f_{\text{BCRP}} ( x_{k - 1} ), \quad \forall k \in \mathbb{N} , \notag
\end{equation}
where the symbol ``$\cdot$'' denotes element-wise multiplication. 
The algorithm possesses a guarantee of convergence but not the convergence rate \cite{Cover1984,Csiszar1984}. 

Now we show that the optimization problem \eqref{eq_bcrp} can be also solved by the exponentiated gradient method with Armijo line search. 

\begin{proposition} \label{prop_bcrp}
The function $f_{\text{BCRP}}$ is locally smooth relative to the (negative) Shannon entropy on the constraint set $\mathcal{P}$. 
\end{proposition}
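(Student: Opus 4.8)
The plan is to reduce the claim to the twice-differentiable sufficient condition of Proposition \ref{prop_twice_differentiability}, and then transfer local relative smoothness from the squared Euclidean norm to the Shannon entropy using the latter's strong convexity on the simplex.

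First I would pin down the effective domain. One has $\dom f_{\text{BCRP}} = \Set{ x \in \mathbb{R}^d | \braket{ a_i, x } > 0, \ i = 1, \dots, t }$. Each set $\set{ x | \braket{ a_i, x } > 0 }$ is the preimage of the open ray $( 0, + \infty )$ under the continuous linear functional $x \mapsto \braket{ a_i, x }$, hence open, and a finite intersection of open sets is open; so $\dom f_{\text{BCRP}}$ is open in $\mathbb{R}^d$. Consequently $\dom f_{\text{BCRP}} \cap \mathcal{P}$ is relatively open in $\mathcal{P}$, as Proposition \ref{prop_twice_differentiability} requires. Note that Definition \ref{def_local_smoothness} only imposes the relative smoothness estimate at points of $\mathcal{P} \cap \dom f_{\text{BCRP}}$, so the faces and vertices of $\mathcal{P}$ on which some $\braket{ a_i, x }$ vanishes---where $f_{\text{BCRP}}$ equals $+ \infty$---cause no difficulty.

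Next, on $\dom f_{\text{BCRP}}$ the function $f_{\text{BCRP}}$ is a finite sum of compositions of the $C^\infty$ map $s \mapsto - \log s$ on $( 0, + \infty )$ with the linear maps $x \mapsto \braket{ a_i, x }$; hence $f_{\text{BCRP}}$ is $C^\infty$, in particular twice continuously differentiable, on $\dom f_{\text{BCRP}} \cap \mathcal{P}$. Proposition \ref{prop_twice_differentiability} then gives that $f_{\text{BCRP}}$ is locally smooth relative to $h ( \cdot ) := ( 1 / 2 ) \norm{ \cdot }_2^2$ on $\mathcal{P}$. (Concretely, $\nabla^2 f_{\text{BCRP}} ( x ) = \sum_{i = 1}^t \braket{ a_i, x }^{-2} a_i a_i^\top$ is continuous on the open set $\dom f_{\text{BCRP}}$, hence bounded on any closed ball contained in it, which is exactly the estimate used in the proof of Proposition \ref{prop_twice_differentiability}.) Finally I would invoke the transfer argument recorded after Assumption \ref{assump_standard_MD}: by Pinsker's inequality the negative Shannon entropy is strongly convex with respect to the $\ell_1$-norm on $\mathcal{P}$, and local smoothness relative to $( 1 / 2 ) \norm{ \cdot }_2^2$ together with strong convexity of $\tilde{h}$ with respect to any norm (using equivalence of norms on $\mathbb{R}^d$) implies local smoothness relative to $\tilde{h}$; hence $f_{\text{BCRP}}$ is locally smooth relative to the negative Shannon entropy on $\mathcal{P}$.

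I do not anticipate a serious obstacle. The only point that needs care is the domain bookkeeping---ensuring relative openness of $\dom f_{\text{BCRP}} \cap \mathcal{P}$ in $\mathcal{P}$ even though $f_{\text{BCRP}}$ is $+ \infty$ on part of $\mathcal{P}$---and this is settled by the observations that $\dom f_{\text{BCRP}}$ is open and that Definition \ref{def_local_smoothness} only constrains points of $\mathcal{P} \cap \dom f_{\text{BCRP}}$.
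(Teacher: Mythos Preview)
Your proposal is correct and follows essentially the same route as the paper's proof: show $\dom f_{\text{BCRP}}$ is open so that $\dom f_{\text{BCRP}} \cap \mathcal{P}$ is relatively open, invoke Proposition~\ref{prop_twice_differentiability} to obtain local smoothness relative to $(1/2)\norm{\cdot}_2^2$, and then transfer to the negative Shannon entropy via Pinsker's inequality and norm equivalence. You have simply made explicit the details the paper leaves implicit (the openness argument, the form of the Hessian, and the domain bookkeeping).
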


\begin{proof}
Note that $\dom ( f_{\text{BCRP}} )$ is open, and hence $\dom ( f_{\text{BCRP}} ) \cap \mathcal{X}$ is relatively open in $\mathcal{X}$. 
It is easily checked that $f_{\text{BCRP}}$ is twice continuously differentiable on $\dom ( \fqst )$, and hence on $\dom ( f_{\text{BCRP}} ) \cap \mathcal{X}$.
By Proposition \ref{prop_twice_differentiability}, the function $f_{\text{BCRP}}$ is locally smooth relative to $h( \cdot ) := ( 1 / 2 ) \norm{ \cdot }_{2}^2$. 
By Pinsker's inequality \cite{Csiszar2011}, the Shannon entropy is strongly convex on $\mathcal{P}$ with respect to the $\ell_1$-norm. 
As all norms on a finite-dimensional space are equivalent, the proposition follows. 
\end{proof}

Therefore, the exponentiated gradient method---mirror descent with the Shannon entropy---is guaranteed to converge for solving \eqref{eq_bcrp}. 
The iteration rule has a closed-form: 
\begin{equation}
x ( \alpha ) = c^{-1} x \cdot \exp ( - \alpha \nabla f_{\text{BCRP}} ( x ) ) \, , \quad \forall x \in \mathcal{P} , \alpha \geq 0 \, , \notag
\end{equation}
where we set $\exp ( v ) := ( \eu^{v_1}, \ldots, \eu^{v_d} )$ for any $v = ( v_1, \ldots, v_d ) \in \mathbb{R}^d$. 

\begin{figure}[t]
\centering
\includegraphics[width=.8\textwidth]{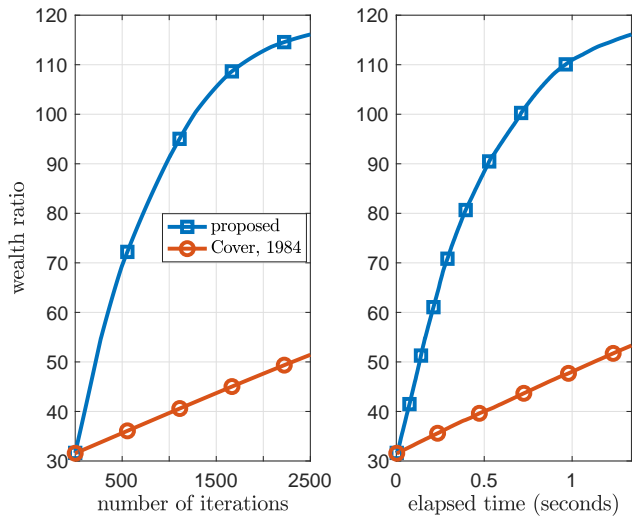} 
\caption[Comparison of convergence rates for portfolio selection]{\label{fig_portfolio} Wealth yielded by different algorithms on the NYSE data.}
\end{figure}

We compare the convergence speeds of Cover's algorithm and the exponentiated gradient method with Armijo line search, for the New York Stock Exchange (NYSE) data during January 1st, 1985--June 30th, 2010 \cite{Li2016b}.
The corresponding dimensions are $n = 6431$ and $d = 23$. 
We set $\overline{\alpha} = 10$, $r = 0.5$, and $\tau = 0.8$ for the Armijo line search procedure. 
The numerical experiment was done in MATLAB R2018a, on a MacBook Pro with an Intel Core i7 2.8GHz processor and 16GB DDR3 memory. 

The numerical result is presented in Figure \ref{fig_portfolio}, where we plot the total wealth yielded by the algorithm iterates, with an initial wealth of one dollar. 
The proposed approach---exponentiated gradient method with Armijo line search---was obviously faster than Cover's algorithm. 
For example, fixing the budget of the computation time to be one second, the proposed approach yields more than twice of the wealth yielded by Cover's algorithm. 

\subsection{Quantum State Tomography}

Quantum state tomography (QST) is the task of estimating the state of qubits (quantum bits), given measurement outcomes. 
Numerically, QST corresponds to solving a convex optimization problem specified in Example \ref{exp_qst}. 
Recall that in the introduction, we have shown that the corresponding objective function, $f_{\text{QST}}$, does not satisfy the bounded gradient condition and is not smooth relative to the von Neumann entropy, while mirror descent with the Burg entropy has high per-iteration computational complexity. 

The following proposition is a matrix analogue to Proposition \ref{prop_bcrp}. 
A proof is provided in Section \ref{proof_qst}. 

\begin{proposition} \label{prop_qst}
The function $f_{\text{QST}}$ is locally smooth relative to the von Neumann entropy on the constraint set $\mathcal{D}$. 
\end{proposition}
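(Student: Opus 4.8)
The plan is to follow the proof of Proposition~\ref{prop_bcrp}, substituting the scalar logarithm by its matrix counterpart and Pinsker's inequality by the \emph{quantum} Pinsker inequality. First I would verify that $\dom f_{\text{QST}} \cap \mathcal{D}$ is relatively open in $\mathcal{D}$: the set $\dom f_{\text{QST}} = \Set{ x | \tr ( M_i x ) > 0, \ i = 1, \ldots, n }$ is an intersection of finitely many open half-spaces in the (real) vector space of Hermitian matrices, hence open, so its intersection with $\mathcal{D}$ is relatively open in $\mathcal{D}$. Next I would check that $f_{\text{QST}}$ is twice continuously differentiable on $\dom f_{\text{QST}}$: each summand $x \mapsto - \log \tr ( M_i x )$ is the composition of the smooth function $- \log$ on $( 0, + \infty )$ with the linear map $x \mapsto \tr ( M_i x )$, hence is $C^\infty$ wherever $\tr ( M_i x ) > 0$. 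Proposition~\ref{prop_twice_differentiability} then applies and shows that $f_{\text{QST}}$ is locally smooth relative to $h ( \cdot ) := ( 1 / 2 ) \norm{ \cdot }_2^2$, the squared Frobenius norm, on $\mathcal{D}$.

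It remains to upgrade this to local smoothness relative to the von Neumann entropy. By the discussion following Assumption~\ref{assump_standard_MD}, it suffices to show that the von Neumann entropy $h_{\mathrm{vN}} ( \rho ) := \tr ( \rho \log \rho )$ is strongly convex on $\mathcal{D}$ with respect to some norm. I would use the fact that, on density matrices, the Bregman divergence induced by $h_{\mathrm{vN}}$ coincides with the quantum relative entropy, $D_{h_{\mathrm{vN}}} ( \rho, \sigma ) = \tr ( \rho \log \rho ) - \tr ( \rho \log \sigma )$ (the linear term $\tr ( I ( \rho - \sigma ) )$ vanishes because both matrices have unit trace), together with the quantum Pinsker inequality $D_{h_{\mathrm{vN}}} ( \rho, \sigma ) \geq ( 1 / 2 ) \norm{ \rho - \sigma }_1^2$, where $\norm{ \cdot }_1$ denotes the trace (Schatten-$1$) norm. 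This is precisely the strong-convexity condition of Assumption~\ref{assump_standard_MD} with $\mu = 1$. Since all norms on the finite-dimensional space of Hermitian $d \times d$ matrices are equivalent, the cited discussion yields that $f_{\text{QST}}$ is locally smooth relative to the von Neumann entropy on $\mathcal{D}$, which is the claim.

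The only non-routine ingredient is the quantum Pinsker inequality; I would cite it from the quantum information literature, or, for a self-contained treatment, establish strong convexity of $h_{\mathrm{vN}}$ on $\mathcal{D}$ directly, e.g.\ by bounding the Hessian of $h_{\mathrm{vN}}$ along segments in $\mathcal{D}$ or by invoking the operator convexity of $t \mapsto t \log t$. A secondary point deserving a remark is that $\nabla h_{\mathrm{vN}}$ is defined only on positive definite density matrices, so $\tilde{\mathcal{X}}$ omits the rank-deficient boundary of $\mathcal{D}$; this causes no trouble, since Definition~\ref{def_local_smoothness} and Proposition~\ref{prop_twice_differentiability} are phrased in terms of $\tilde{\mathcal{X}}$, but I would spell it out to keep the domain bookkeeping consistent with Assumption~\ref{assump_MD}.
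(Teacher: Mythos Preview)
Your proposal is correct and follows essentially the same route as the paper's own proof: openness of $\dom f_{\text{QST}}$, twice continuous differentiability, Proposition~\ref{prop_twice_differentiability} with the Frobenius norm, the quantum Pinsker inequality for strong convexity of the von Neumann entropy in trace norm, and norm equivalence. Your write-up is in fact more detailed than the paper's (which does not spell out why the domain is open or why the function is $C^\infty$), and your remark about $\tilde{\mathcal{X}}$ excluding rank-deficient matrices is a useful addition not present in the original.
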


Therefore, the \emph{(matrix) exponentiated gradient method}---mirror descent with the von Neumann entropy---with Armijo line search is guaranteed to converge, by Theorem \ref{thm_main_MD}. 
The corresponding iteration rule has a closed-form expression \cite{Bubeck2015a,Tsuda2005}: 
\begin{equation}
x ( \alpha ) = c^{-1} \exp ( \log ( x ) - \alpha \nabla f ( x ) ) , \notag
\end{equation}
for every $x \in \tilde{\mathcal{X}}$ and $\alpha \geq 0$, where $c$ is a positive real normalizing the trace of $x( \alpha )$. 
The functions $\exp$ and $\log$ denote matrix exponential and logarithm, respectively. 

We test the empirical performance of the exponentiated gradinet method with Armijo line search, on real experimental data generated following the setting in \cite{Haffner2005}. 
We compare it with the performances of the diluted $R \rho R$ algorithm \cite{Rehacek2007}, SCOPT \cite{Tran-Dinh2015b}, and the modified Frank-Wolfe algorithm studied in \cite{Odor2016}. 
We also consider the $R \rho R$ algorithm \cite{Hradil1997}; it does not always converge \cite{Rehacek2007}, but is typically much faster than the diluted $R \rho R$ algorithm in practice.

\begin{figure}[ht]
\centering
\includegraphics[width=.7\textwidth]{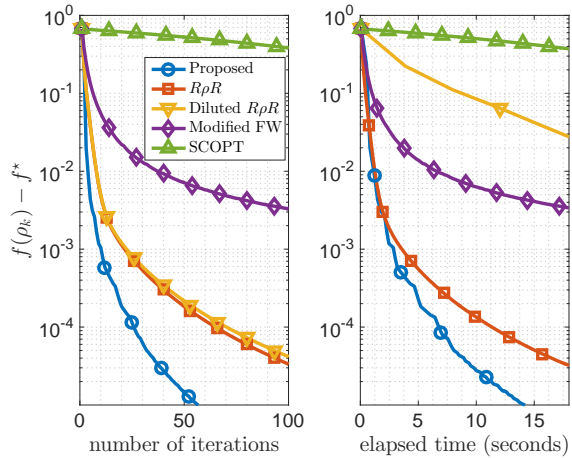} 
\caption[Comparison of convergence rates for QST with 6 qubits]{\label{fig_q6} The 6-qubit case.}
\end{figure}

\begin{figure}[ht]
\centering
\includegraphics[width=.7\textwidth]{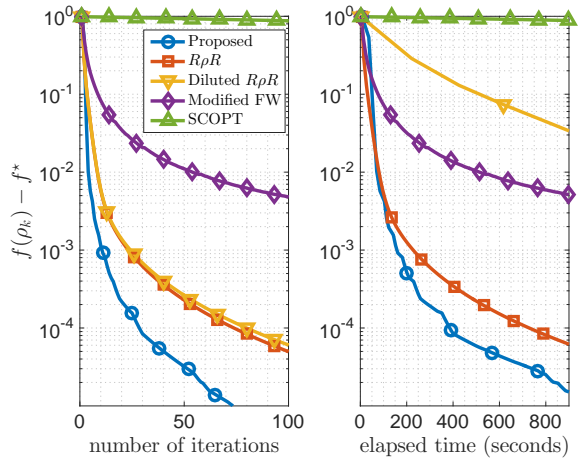} 
\caption[Comparison of convergence rates for QST with 8 qubits]{\label{fig_q8} The 8-qubit case.}
\end{figure}


We compare the convergence speeds for the $6$-qubit ($d = 2^6$) and $8$-qubit ($d = 2^8$) cases, in Fig. \ref{fig_q6} and \ref{fig_q8}, respectively.
The corresponding ``sample sizes'' (number of summands in $\fqst$) are $n = 60640$ and $n = 460938$, respectively.
The numerical experiments were done in MATLAB R2015b, on a MacBook Pro with an Intel Core i7 2.8GHz processor and 16GB DDR3 memory.
We set $\alpha = 10$, and $\gamma = \tau = 0.5$ in Algorithm \ref{alg_MD} for both cases.
In both figures, $f^\star$ denotes the minimum value of $\fqst$ found by the five algorithms in 120 iterations.

One can observe that the exponentiated gradient method with Armijo line search is the fastest, in terms of the actual elapsed time. 
The slowness of the other algorithms is explainable. 

\begin{enumerate}
\item The diluted $R \rho R$ algorithm, using the notation of this paper, iterates as 
\begin{equation}
x_{k + 1} = c_k^{-1} \left[ I + \beta_k f' ( x_k ) \right]^{\mathrm{H}} \rho_k \left[ I + \beta_k f' ( x_k ) \right], \notag
\end{equation}
where $c_k$ normalizes the trace of $x_{k + 1}$. 
To guarantee convergence, the step sizes $\beta_k$ are computed by exact line search.
The exact line search procedure renders the algorithm slow.

\item SCOPT is a projected gradient method for minimizing self-concordant functions \cite{Nesterov2004,Nesterov1994}. 
Notice that projection onto $\mathcal{D}$ typically results in a low-rank output; hence, it is possible that $\tr ( M_i x_k ) = 0$ for some low-rank  $M_i$ and iterate $x_k$, but then $x_k$ is not a feasible solution because $\log ( 0 )$ is not defined\footnote{In a standard setup of quantum state tomography, the matrices $M_i$ are single-rank \cite{Haffner2005}.}. 
This is called the stalling problem in \cite{Knee2018}. 
Luckily, self-concordance of $f_{\text{QST}}$ ensures that if an iterate $x_k$ lies in $\dom f_{\text{QST}}$, and the next iterate $x_{k + 1}$ lies in a small enough \emph{Dikin ellipsoid} centered at $x_k$, then $x_{k + 1}$ also lies in $\dom f_{\text{QST}}$. 
It is easily checked that $\fqst$ is a self-concordant function of parameter $2 \sqrt{ n }$.  
Following the theory in \cite{Nesterov2004,Nesterov1994}, the radius of the Dikin ellipsoid shrinks at the rate $O( n^{-1/2} )$, so SCOPT becomes slow when $n$ is large.

\item The Frank-Wolfe algorithm suffers for a sub-linear convergence rate when the solution is near an extreme point of the constraint set (see, e.g., \cite{Lacoste-Julien2015} for an illustration in the vector case). 
Notice that the set of extreme points of $\mathcal{D}$ is the set of single-rank positive semi-definite matrices of unit trace. 
In the experimental data we have, the density matrix to be estimated is indeed close to a single-rank matrix (which is called a \emph{pure state} in quantum mechanics). 
Therefore, the ML estimate---the minimizer of $f_{\text{QST}}$ on $\mathcal{D}$---is expected to be also close to a single-rank matrix. 
\end{enumerate}

Notice that the empirical convergence rate of the exponentiated gradient method with Armijo line search is linear. 

\section*{Acknowledgements}
We thank David Gross for valuable discussions, and Ya-Ping Hsieh for checking previous versions of this paper. 
YHL and VC were supported by SNF 200021-146750 and ERC project time-data 725594. 
CAR was supported by the Freie Universit\"{a}t Berlin within the Excellence Initiative of the German Research Foundation, DFG (SPP 1798 CoSIP), and the Templeton Foundation.

\appendix

\section{Proof of Proposition \ref{prop_not_applicable}} \label{proof_not_applicable}

Consider the two-dimensional case, where $x = ( x_{i, j} )_{1 \leq i, j \leq 2} \in \mathbb{C}^{2 \times 2}$. 
Define $e_1 := ( 1, 0 )$ and $e_2 := ( 0, 1 )$. 
Suppose that there are only two summands, with $M_1 = e_1 \otimes e_1$ and $M_2 = e_2 \otimes e_2$. 
Then we have $f ( x ) = - \log ( x_{1,1} ) - \log ( x_{2,2} )$. 
It suffices to disprove all properties on the set of diagonal density matrices.
Hence, we will focus on the function $g ( x, y ) := - \log x - \log y$, defined for any $( x, y )$ in the probability simplex $\mathcal{P} \subset \mathbb{R}^2$.

As either $x$ or $y$ can be arbitrarily close to zero, it is easily checked that the gradient of $g$ is unbounded. 
Now we check the relative smoothness condition. 
As we only consider diagonal matrices, it suffices to check with respect to the (negative) Shannon entropy: 
\begin{equation}
h(x, y) := - x \log x - y \log y + x + y \, , \quad \forall ( x, y ) \in \mathcal{P} \, , \notag
\end{equation}
for which the convention $0 \log 0 := 0$ is adopted. 

\begin{lemma}[\cite{Lu2018}]
The function $g$ is $L$-smooth relative to the Shannon entropy for some $L > 0$, if and only if $- L h - g$ is convex.
\end{lemma}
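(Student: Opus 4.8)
This lemma is the relative-smoothness characterization of Lu, Freund, and Nesterov \cite{Lu2018}, specialized to the present $g$ and the Shannon entropy, and the plan is to reduce it to the first-order characterization of convexity for differentiable functions. Write $\phi$ for the \emph{convex} function generating the Bregman divergence of the exponentiated gradient method --- the negative Shannon entropy, which with the sign convention for $h$ adopted above is $\phi = - h$ --- so that $D_\phi(z_2,z_1) = \phi(z_2) - \phi(z_1) - \braket{\nabla\phi(z_1), z_2 - z_1}$. All inequalities below will be quantified over the relative interior of the simplex $\mathcal{P}$, i.e.\ over pairs with strictly positive coordinates; this set is convex and is exactly where both $g$ and $\phi$ are differentiable.

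By definition, $g$ is $L$-smooth relative to the Shannon entropy precisely when
\[
g(z_2) \le g(z_1) + \braket{\nabla g(z_1), z_2 - z_1} + L\, D_\phi(z_2, z_1)
\]
for all admissible $z_1, z_2$. First I would substitute the expression for $D_\phi$, collect terms, and multiply through by $-1$; using that the gradient of $L\phi - g$ is $L\nabla\phi - \nabla g$, this puts the inequality in the equivalent form
\[
(L\phi - g)(z_2) \ge (L\phi - g)(z_1) + \braket{\nabla(L\phi - g)(z_1), z_2 - z_1}.
\]
This is exactly the gradient inequality for the differentiable function $L\phi - g = -Lh - g$ on the convex set in question. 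I would then invoke the standard fact that a differentiable function on a convex set is convex if and only if it lies above each of its tangent planes (see, e.g., \cite{Rockafellar2009}), which makes the two displays equivalent to convexity of $-Lh - g$. Since the same $L$ appears throughout, the quantifier ``for some $L > 0$'' transfers verbatim, and the lemma follows.

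The only delicate point is the bookkeeping at the boundary and the sign of the entropy. The relative-smoothness inequality has to be read over the relative interior of $\mathcal{P}$: on the boundary $\nabla g$ is unbounded and the entropy fails to be differentiable, so the derivative-based statements above would not even be meaningful there --- but this is also precisely the domain on which the first-order convexity criterion applies, so nothing is lost. And one has to keep straight that the \emph{convex} mirror map is the negative of the entropy, which is exactly what turns $L\phi - g$ into the ``$-Lh - g$'' of the statement rather than ``$+Lh - g$''. Everything else is routine re-collection of terms, which I would not expect to cause any trouble.
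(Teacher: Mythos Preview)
Your proof is correct and is precisely the standard argument behind the characterization in \cite{Lu2018}: unwind the definition of $L$-relative smoothness, absorb the Bregman terms into the function $L\phi-g$, and recognize the resulting inequality as the first-order convexity criterion. Your handling of the sign convention (the convex mirror map is $\phi=-h$, so $L\phi-g=-Lh-g$) is exactly what reconciles the lemma's ``$-Lh-g$'' with the usual ``$Lh-f$'' form.

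As for comparison: the paper does not supply a proof of this lemma at all --- it is simply quoted from \cite{Lu2018} and then applied via the Hessian test. So there is nothing in the paper to compare against; your write-up fills in what the paper leaves as a citation.
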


Therefore, we check the positive semi-definiteness of the Hessian of $- L h - g$. 
A necessary condition for the Hessian to be positive semi-definite is that 
\begin{equation}
- L \frac{\partial^2 h}{\partial x^2} ( x, y ) - \frac{\partial^2 g}{\partial x^2} ( x, y ) = \frac{L}{x} - \frac{1}{x^2} \geq 0 , \notag
\end{equation}
for all $x \in ( 0, 1 )$, but the inequality cannot hold for $x < ( 1 / L )$, for any fixed $L > 0$.  

\section{Proof of Lemma \ref{lem_equiv}} \label{app_lem_equiv}

(Statement 2 $\Rightarrow$ Statement 1) 
Let $x \in \mathcal{X} \cap \dom f$, and $z_1, z_2 \in \mathcal{B}_{\varepsilon_x} ( x ) \cap \tilde{\mathcal{X}}$. 
Define, for every $\tau \in [ 0, 1 ]$, $z_\tau := z_1 + \tau ( z_2 - z_1 )$. 
We write
\begin{align}
f ( z_2 ) - \left[ f ( z_1 ) + \braket{ \nabla f ( z_1 ), z_2 - z_1 } \right] 
& = \int_0^1 \braket{ \nabla f ( z_\tau ) - \nabla f ( z_1 ), z_2 - z_1 } \, \du \tau \notag \\
& \leq \int_0^1 \norm{ \nabla f ( z_\tau ) - \nabla f ( z_1 ) }_2 \norm{ z_2 - z_1 }_2 \, \du \tau \notag \\
& \leq \int_0^1 L_x \tau \norm{ z_2 - z_1 }_2^2 \, \du \tau \notag \\
& = \frac{L_x}{2} \norm{ z_2 - z_1 }_2^2 , \notag
\end{align}
where we have applied the Cauchy-Schwarz inequality for the first inequality, and the local smoothness condition for the second inequality. 
Note that $\mathcal{B}_{\varepsilon_x} \cap \tilde{\mathcal{X}}$ is the intersection of convex sets, and hence is convex; therefore, $z_\tau \in \mathcal{B}_{\varepsilon_x} \cap \tilde{\mathcal{X}}$ for every $\tau \in [ 0, 1 ]$. 

(Statement 1 $\Rightarrow$ Statement 2) 
Let $x \in \mathcal{X} \cap \dom f$, and $z_1, z_2 \in \mathcal{B}_{\varepsilon_x} ( x ) \cap \tilde{\mathcal{X}}$. 
Define $\varphi ( z ) := f ( z ) - \braket{ \nabla f ( z_1 ), z }$. 
Then $\nabla \varphi$ is locally Lipschitz on $\tilde{\mathcal{X}}$; moreover, since $\nabla \varphi ( z_1 ) = 0$, the point $z_1$ is a global minimizer of $\varphi$. 
Therefore, we obtain
\begin{equation}
\varphi ( z_1 ) \leq \varphi ( z_2 - \frac{1}{L_x} \nabla \varphi ( z_2 ) ) \leq \varphi ( z_2 ) - \frac{1}{2 L_x} \norm{ \nabla \varphi ( z_2 ) }^2 ; \notag
\end{equation}
that is, 
\begin{equation}
f ( z_2 ) \geq f ( z_1 ) + \braket{ \nabla f ( z_1 ), z_2 - z_1 } + \frac{1}{2 L_x} \norm{ \nabla f ( z_2 ) - \nabla f ( z_1 ) }_2^2 . \notag
\end{equation}
Similarly, we get
\begin{equation}
f ( z_1 ) \geq f ( z_2 ) + \braket{ \nabla f ( z_2 ), z_1 - z_2 } + \frac{1}{2 L_x} \norm{ \nabla f ( z_1 ) - \nabla f ( z_2 ) }_2^2 . \notag
\end{equation}
Summing up the two inequalities; we obtain
\begin{equation}
\braket{ \nabla f ( z_2 ) - \nabla f ( z_1 ), z_2 - z_1 } \geq \frac{1}{L_x} \norm{ \nabla f ( z_2 ) - \nabla f ( z_1 ) }_2^2 . \notag
\end{equation}
This implies, by the Cauchy-Schwarz inequality, 
\begin{equation}
\norm{ \nabla f ( z_2 ) - \nabla f ( z_1 ) }_2 \leq L_x \norm{ z_2 - z_1 }_2 . \notag
\end{equation}

\section{Auxiliary Technical Lemmas for Proving Theorem \ref{thm_main_MD}} \label{app_auxiliary_MD}

\begin{lemma} \label{lem_fixed_point_MD}
If $x ( \alpha ) = x$ for some $x \in \tilde{\mathcal{X}}$, then $x$ is a solution to \eqref{eq_problem_general}. 
If a point $x \in \tilde{\mathcal{X}}$ is a solution to \eqref{eq_problem_general}, then $x ( \alpha ) = x$ for all $\alpha \in [ 0, + \infty)$. 
\end{lemma}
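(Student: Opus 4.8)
The plan is to prove the two implications in Lemma~\ref{lem_fixed_point_MD} separately, in both cases exploiting the variational characterization of $x(\alpha)$ as the minimizer of the strictly convex function $\phi_\alpha(y) := \alpha \braket{ \nabla f(x), y - x } + D_h(y, x)$ over $\mathcal{X}$.

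\textbf{First implication ($x(\alpha) = x \Rightarrow x$ is optimal).} First I would write down the first-order optimality condition for the mirror descent subproblem at the minimizer $x(\alpha)$: for all $y \in \mathcal{X}$,
\begin{equation}
\braket{ \alpha \nabla f(x) + \nabla h(x(\alpha)) - \nabla h(x), y - x(\alpha) } \geq 0. \notag
\end{equation}
Substituting $x(\alpha) = x$ collapses the $\nabla h$ terms, leaving $\alpha \braket{ \nabla f(x), y - x } \geq 0$ for all $y \in \mathcal{X}$. For $\alpha > 0$ this is exactly the optimality condition for the convex problem~\eqref{eq_problem_general}, so $x$ solves~\eqref{eq_problem_general}. (The case $\alpha = 0$ is degenerate since then $x(0) = x$ trivially; but the statement only needs that \emph{some} $\alpha$ with $x(\alpha) = x$ yields optimality, and in the line-search context the relevant $\alpha_k$ is strictly positive. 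If one wants the clean statement for all $\alpha \geq 0$, note the hypothesis $x(\alpha) = x$ is vacuous content at $\alpha = 0$ and should be read as holding for some $\alpha > 0$, or one simply restricts to $\alpha > 0$.)

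\textbf{Second implication ($x$ optimal $\Rightarrow x(\alpha) = x$ for all $\alpha \geq 0$).} Suppose $x \in \tilde{\mathcal{X}}$ solves~\eqref{eq_problem_general}. By convexity and differentiability of $f$, optimality gives $\braket{ \nabla f(x), y - x } \geq 0$ for all $y \in \mathcal{X}$. I would then show $x$ is the minimizer of $\phi_\alpha$ over $\mathcal{X}$ by checking its optimality condition: for all $y \in \mathcal{X}$,
\begin{equation}
\braket{ \alpha \nabla f(x) + \nabla h(x) - \nabla h(x), y - x } = \alpha \braket{ \nabla f(x), y - x } \geq 0, \notag
\end{equation}
which holds precisely because $x$ solves~\eqref{eq_problem_general} and $\alpha \geq 0$. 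Since $\phi_\alpha$ is strictly convex on $\mathcal{X}$ (as $h$ is), its minimizer over the convex set $\mathcal{X}$ is unique, hence $x(\alpha) = x$.

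\textbf{Main obstacle.} The computations are routine; the only real subtlety is justifying the first-order optimality characterization of $x(\alpha)$, which requires that $\phi_\alpha$ is differentiable at the relevant points and that $x(\alpha) \in \tilde{\mathcal{X}} \subseteq \dom \nabla h$ — this is exactly what Assumption~\ref{assump_MD} grants us, so I would invoke it explicitly when writing the stationarity inequality. A secondary point worth a sentence is handling the edge case $\alpha = 0$ cleanly in the first implication, as noted above. Otherwise the proof is a direct application of convex optimality conditions plus strict convexity for uniqueness.
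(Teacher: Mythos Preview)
Your proposal is correct and follows essentially the same route as the paper: both arguments hinge on the observation that the first-order optimality condition for \eqref{eq_problem_general}, namely $\braket{\nabla f(x), z - x} \geq 0$ for all $z \in \mathcal{X}$, is identical (after inserting the vanishing term $\nabla h(x) - \nabla h(x)$) to the first-order optimality condition characterizing $x(\alpha)$, with uniqueness coming from strict convexity of $h$. Your version is slightly more explicit in separating the two implications and in flagging the degenerate case $\alpha = 0$, which the paper's proof leaves implicit.
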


\begin{proof}
That $x$ is a solution to \eqref{eq_problem_general} is equivalent to the optimality condition
\begin{equation}
\braket{ \nabla f ( x ), z - x } \geq 0 , \quad \forall z \in \mathcal{X} . \notag
\end{equation}
We can equivalently write
\begin{equation}
\braket{ \alpha \nabla f ( x ) + \nabla h ( x ) - \nabla h ( x ), z - x } \geq 0 , \quad \forall z \in \mathcal{X} , \notag
\end{equation}
which is the optimality condition of 
\begin{equation}
x ( \alpha ) = \argmin_{z} \set{ \alpha \braket{ \nabla f ( x ), z - x } + D_h ( z, x ) | z \in \mathcal{X} } . \notag
\end{equation}
\end{proof}

\begin{lemma} \label{lem_monotonicity}
For every $x \in \tilde{\mathcal{X}}$ and $\alpha > 0$, it holds that 
\begin{equation}
\braket{ \nabla f ( x( \alpha ) ), x( \alpha ) - x } \leq - \alpha^{-1} D ( x( \alpha ), x ) \leq 0 . \notag
\end{equation}
\end{lemma}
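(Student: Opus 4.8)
The plan is to use the variational characterization of $x(\alpha)$ as the minimizer of $\Phi_\alpha(z) := \alpha \braket{\nabla f(x), z - x} + D_h(z, x)$ over $\mathcal{X}$. First I would write down the first-order optimality condition at $z = x(\alpha)$: since $x(\alpha)$ minimizes $\Phi_\alpha$ over the convex set $\mathcal{X}$, we have $\braket{\nabla \Phi_\alpha(x(\alpha)), z - x(\alpha)} \geq 0$ for all $z \in \mathcal{X}$. Computing the gradient of $\Phi_\alpha$ in its first argument gives $\nabla \Phi_\alpha(x(\alpha)) = \alpha \nabla f(x) + \nabla h(x(\alpha)) - \nabla h(x)$, so the optimality condition reads
\begin{equation}
\braket{ \alpha \nabla f(x) + \nabla h(x(\alpha)) - \nabla h(x), z - x(\alpha) } \geq 0 , \quad \forall z \in \mathcal{X} . \notag
\end{equation}
The natural thing is to plug in the feasible point $z = x$ (which lies in $\mathcal{X}$ since $x \in \tilde{\mathcal{X}}$).

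Substituting $z = x$ yields $\braket{ \alpha \nabla f(x) + \nabla h(x(\alpha)) - \nabla h(x), x - x(\alpha) } \geq 0$, i.e.
\begin{equation}
\alpha \braket{ \nabla f(x), x(\alpha) - x } \leq \braket{ \nabla h(x(\alpha)) - \nabla h(x), x - x(\alpha) } = - \braket{ \nabla h(x(\alpha)) - \nabla h(x), x(\alpha) - x } . \notag
\end{equation}
The right-hand side is exactly $-\bigl( D_h(x, x(\alpha)) + D_h(x(\alpha), x) \bigr)$ by the three-point identity for Bregman divergences, hence it is $\leq - D_h(x(\alpha), x)$. Dividing by $\alpha > 0$ gives $\braket{ \nabla f(x), x(\alpha) - x } \leq -\alpha^{-1} D_h(x(\alpha), x) \leq 0$.

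The one remaining gap is that the claimed inequality has $\nabla f(x(\alpha))$ on the left, not $\nabla f(x)$. To bridge this I would invoke convexity of $f$: $\braket{ \nabla f(x(\alpha)) - \nabla f(x), x(\alpha) - x } \geq 0$ by monotonicity of the gradient of a convex function, so $\braket{ \nabla f(x(\alpha)), x(\alpha) - x } \leq \braket{ \nabla f(x), x(\alpha) - x }$, and the chain above then delivers the statement. I expect the main (minor) obstacle to be bookkeeping: confirming that the three-point identity applies here, i.e. that $x$, $x(\alpha)$ all lie in $\dom h \cap \dom \nabla h$ so that both Bregman terms are defined (this is guaranteed by Assumption \ref{assump_MD}, which keeps $x(\alpha) \in \tilde{\mathcal{X}}$), and making sure the differentiation of $D_h(\cdot, x)$ at the point $x(\alpha)$ is legitimate (again covered since $x(\alpha) \in \dom \nabla h$). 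Everything else is a direct consequence of the optimality condition plus convexity, so no deeper argument should be needed.
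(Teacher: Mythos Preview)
Your bridging step at the end is where the argument breaks. Monotonicity of the gradient of a convex function gives
\[
\braket{\nabla f(x(\alpha)) - \nabla f(x),\, x(\alpha) - x} \;\ge\; 0,
\]
which means $\braket{\nabla f(x(\alpha)), x(\alpha) - x} \ge \braket{\nabla f(x), x(\alpha) - x}$, not $\le$. So the chain does not close, and in fact the inequality as printed (with $\nabla f(x(\alpha))$) is not true in general: take $f=h=\tfrac12\|\cdot\|_2^2$ on $\mathbb{R}$, so $x(\alpha)=(1-\alpha)x$; then $\braket{\nabla f(x(\alpha)),x(\alpha)-x}=-\alpha(1-\alpha)x^2$ while $-\alpha^{-1}D_h(x(\alpha),x)=-\tfrac{\alpha}{2}x^2$, and the claimed inequality fails for every $\alpha>1/2$.

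What is going on is a typo in the paper: the lemma (and its proof) should read $\nabla f(x)$, not $\nabla f(x(\alpha))$. This is consistent with how the lemma is actually used---in Lemma~\ref{lem_finite_termination}, Lemma~\ref{lem_lower_bound}, and Corollary~\ref{cor_monotonicity} the quantity that appears is always $\braket{\nabla f(x),x(\alpha)-x}$ (equivalently $\braket{\nabla f(x_{k-1}),x_k-x_{k-1}}$). Your argument up to and including the line $\braket{\nabla f(x),x(\alpha)-x}\le -\alpha^{-1}D_h(x(\alpha),x)\le 0$ is a correct proof of the intended statement.

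Compared with the paper, your route is a bit heavier than necessary. The paper's proof is the one-line value comparison: since $x(\alpha)$ minimizes $\Phi_\alpha(z)=\alpha\braket{\nabla f(x),z-x}+D_h(z,x)$ over $\mathcal{X}$ and $x\in\mathcal{X}$, we have $\Phi_\alpha(x(\alpha))\le\Phi_\alpha(x)=0$, which is exactly $\alpha\braket{\nabla f(x),x(\alpha)-x}+D_h(x(\alpha),x)\le 0$. You instead used the first-order optimality condition and then the three-point identity to bound $\braket{\nabla h(x(\alpha))-\nabla h(x),x(\alpha)-x}$; this yields the slightly sharper bound $\alpha\braket{\nabla f(x),x(\alpha)-x}\le -D_h(x(\alpha),x)-D_h(x,x(\alpha))$, at the cost of an extra step that the application does not need.
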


\begin{proof}
By definition, we have
\begin{equation}
\alpha \braket{ \nabla f ( x( \alpha ) ), x( \alpha ) - x } + D ( x( \alpha ), x ) \leq \alpha \braket{ \nabla f ( x, x - x } + D ( x, x ) = 0 . \notag
\end{equation}
\end{proof}

\begin{corollary} \label{cor_monotonicity}
The sequence $\set{ x_k }$ is non-increasing. 
\end{corollary}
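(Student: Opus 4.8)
The claim to prove is Corollary~\ref{cor_monotonicity}: the sequence $\set{ f ( x_k ) }$ is non-increasing (the statement says "the sequence $\set{x_k}$ is non-increasing", but in context this must mean $\set{f(x_k)}$, since $x_k$ are vectors/matrices; I will phrase the proof in terms of $\set{f(x_k)}$).

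The plan is to combine the Armijo acceptance inequality with the monotonicity inequality from Lemma~\ref{lem_monotonicity}. The Armijo rule guarantees that the accepted step size $\alpha_k$ satisfies
\[
f ( x_k ) = f ( x_{k-1} ( \alpha_k ) ) \leq f ( x_{k-1} ) + \tau \braket{ \nabla f ( x_{k-1} ), x_{k-1} ( \alpha_k ) - x_{k-1} } ,
\]
so it suffices to show the inner-product term is nonpositive, i.e. $\braket{ \nabla f ( x_{k-1} ), x_{k-1} ( \alpha_k ) - x_{k-1} } \leq 0$. That, however, is not literally what Lemma~\ref{lem_monotonicity} gives; Lemma~\ref{lem_monotonicity} bounds $\braket{ \nabla f ( x(\alpha) ), x(\alpha) - x }$, the gradient evaluated at the \emph{new} point, not at $x$. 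The right tool is instead the variational (optimality) characterization of $x_{k-1}(\alpha_k)$ together with the three-point identity for Bregman divergences.

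Concretely, I would first record that $x = x_{k-1}$ is feasible in the subproblem defining $x_{k-1}(\alpha_k)$, so optimality gives
\[
\alpha_k \braket{ \nabla f ( x_{k-1} ), x_{k-1}(\alpha_k) - x_{k-1} } + D_h ( x_{k-1}(\alpha_k), x_{k-1} ) \leq \alpha_k \braket{ \nabla f ( x_{k-1} ), x_{k-1} - x_{k-1} } + D_h ( x_{k-1}, x_{k-1} ) = 0 .
\]
Since $D_h \geq 0$ and $\alpha_k > 0$, this forces $\braket{ \nabla f ( x_{k-1} ), x_{k-1}(\alpha_k) - x_{k-1} } \leq -\alpha_k^{-1} D_h ( x_{k-1}(\alpha_k), x_{k-1} ) \leq 0$. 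Plugging this into the Armijo inequality yields $f(x_k) \leq f(x_{k-1})$, which is the desired monotonicity. For the degenerate case where the Armijo search outputs no strict decrease — i.e. $x_{k-1}(\alpha_k) = x_{k-1}$, which by Lemma~\ref{lem_fixed_point_MD} means $x_{k-1}$ is already a solution — we simply have $f(x_k) = f(x_{k-1})$, consistent with "non-increasing". I don't anticipate a serious obstacle here: the only subtlety is being careful that the comparison point $x_{k-1}$ lies in $\tilde{\mathcal{X}} \subseteq \mathcal{X}$, so it is admissible in the argmin, and that Assumption~\ref{assump_MD} keeps the iterates in $\tilde{\mathcal{X}}$ so $\nabla f(x_{k-1})$ is defined; both are guaranteed by hypothesis. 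In fact this is essentially the same one-line computation as in the proof of Lemma~\ref{lem_monotonicity}, just reading off the other nonnegative term, so the whole proof is three lines.
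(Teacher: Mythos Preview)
Your proof is correct and is essentially the paper's own argument: combine the Armijo acceptance inequality with the bound $\braket{\nabla f(x_{k-1}), x_k - x_{k-1}} \leq 0$ coming from optimality of $x_k = x_{k-1}(\alpha_k)$ in the mirror-descent subproblem. Your worry that Lemma~\ref{lem_monotonicity} bounds the gradient at the \emph{new} point is just a typo in the lemma's statement --- its own proof (comparing the subproblem objective at $z = x(\alpha)$ versus $z = x$) clearly yields $\braket{\nabla f(x), x(\alpha) - x} \leq -\alpha^{-1} D_h(x(\alpha),x) \leq 0$, which is exactly the inequality you re-derive, and the paper's proof of the corollary simply cites that lemma in one line.
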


\begin{proof}
The Armijo rule and Lemma \ref{lem_monotonicity} guarantee that 
\begin{equation}
f ( x_k ) \leq f ( x_{k - 1} ) + \tau \braket{ \nabla f ( x_{k - 1} ), x_k - x_{k - 1} } \leq f ( x_{k - 1} ) . \notag
\end{equation}
\end{proof}

\section{Proof of Theorem \ref{thm_base}} \label{app_thm_base}

For every $u \in \mathcal{X} \cap \dom f$, we write
\begin{align}
f ( x_{k - 1} ) - f ( u ) & \leq - \braket{ \nabla f ( x_{k - 1} ), u - x_{k - 1} } \notag \\
& = - \braket{ \nabla f ( x_{k - 1} ), u - x_k } - \braket{ \nabla f ( x_{k - 1} ), x_k - x_{k - 1}} . \notag
\end{align}
The optimality condition for $x_k$ implies
\begin{equation}
\braket{ \alpha_k \nabla f ( x_{k - 1} ) + \nabla h ( x_k ) - \nabla h ( x_{k - 1} ), u - x_k } \geq 0 . \notag
\end{equation}
Applying the \emph{three-point identity} \cite{Chen1993}, we obtain
\begin{align}
\braket{ \nabla f ( x_{k - 1} ), u - x_k } & \geq - \alpha_k^{-1} \braket{ \nabla h ( x_k ) - \nabla h ( x_{k - 1} ), u - x_k } \notag \\
& = - \alpha_k^{-1} \left[ D_h ( u, x_{k - 1} ) - D_h ( u, x_k ) - D_h ( x_k, x_{k - 1} ) \right] \notag \\
& \geq - \alpha_k^{-1} \left[ D_h ( u, x_{k - 1} ) - D_h ( u, x_k ) ) \right] . \notag
\end{align}
Then we can write 
\begin{align}
\alpha_k \left[ f ( x_{k - 1} ) -  f ( u ) \right] & \leq \left[ D_h ( u, x_{k - 1} ) - D_h ( u, x_k ) ) \right] - \alpha_k \braket{ \nabla f ( x_{k - 1} ), x_k - x_{k - 1} } . \notag
\end{align}
Summing up the inequality for all $1 \leq k \leq n$, we get 
\begin{align}
- S_n f ( u ) + \sum_{k = 1}^n \alpha_k f ( x_{k - 1} ) \leq D ( u, x_0 ) - \sum_{k = 1}^n \alpha_k \braket{ \nabla f ( x_{k - 1} ), x_k - x_{k - 1} } , \notag
\end{align}
where $S_n := \sum_{k = 1}^{n} \alpha_k$. 
Corollary \ref{cor_monotonicity} says that the sequence $( f ( x_k ) )_{k \in \mathbb{N}}$ is non-increasing; then we have
\begin{equation}
\sum_{k = 1}^n \alpha_k f ( x_{k - 1} ) \geq \sum_{k = 1}^n \alpha_k f ( x_n ) = S_n f ( x_n ) . \notag
\end{equation}
Therefore, we obtain
\begin{align}
f ( x_n ) - f ( u ) \leq S_n^{-1} \left[ D ( u, x_0 ) - \sum_{k = 1}^n \alpha_k \braket{ \nabla f ( x_{k - 1} ), x_k - x_{k - 1} } \right] . \notag
\end{align}

Note that by the Armijo rule, we have
\begin{align}
f ( x_0 ) - f^\star & \geq \lim_{k \to \infty} f ( x_0 ) - f ( x_k ) \notag \\
& = \sum_{j = 1}^\infty \left[ f ( x_{j - 1} ) - f ( x_{j} ) \right] \notag \\
& \geq - \tau \sum_{j = 1}^\infty \braket{ \nabla f ( x_{j - 1} ), x_j - x_{j - 1}  } . \notag
\end{align}
Therefore, $\braket{ \nabla f ( x_{k - 1} ), x_k - x_{k - 1} }$, which are non-negative by Lemma \ref{lem_lower_bound}, must converge to zero. 
Theorem \ref{thm_base} then follows from the following lemma. 

\begin{lemma}[\cite{Polyak1987}]
Let $\set{ a_k }$ be a sequence of real numbers, and $\set{ b_k }$ be a sequence of positive real numbers. 
Define $c_n := \sigma_n^{-1} \sum_{k = 1}^n b_k a_k$ for every $n \in \mathbb{N}$, where $\sigma_n := \sum_{k = 1}^n b_k$. 
If $a_k \to 0$ and $\sigma_n \to + \infty$, then $c_n \to 0$. 
\end{lemma}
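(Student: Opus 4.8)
This is the classical Toeplitz (weighted Ces\`aro) lemma, and the plan is the standard $\varepsilon$--$N$ argument. Fix $\varepsilon > 0$. Since $a_k \to 0$, there exists $N \in \mathbb{N}$ such that $\abs{ a_k } \leq \varepsilon$ for all $k > N$. The idea is then to split the defining sum for $c_n$ at the index $N$:
\begin{equation}
c_n = \sigma_n^{-1} \sum_{k = 1}^{N} b_k a_k + \sigma_n^{-1} \sum_{k = N + 1}^{n} b_k a_k , \quad \forall n > N . \notag
\end{equation}
The first term is a fixed quantity (depending only on $N$) divided by $\sigma_n$; since $\sigma_n \to +\infty$, this term tends to $0$ as $n \to \infty$, so it is bounded by $\varepsilon$ for all $n$ large enough, say $n \geq N'$.

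For the second (tail) term I would use that the $b_k$ are positive, so that $\sum_{k = N + 1}^{n} b_k \leq \sigma_n$ and hence $\sigma_n^{-1} \sum_{k = N+1}^n b_k \leq 1$; combining this with $\abs{ a_k } \leq \varepsilon$ on that range gives, by the triangle inequality,
\begin{equation}
\Abs{ \sigma_n^{-1} \sum_{k = N + 1}^{n} b_k a_k } \leq \sigma_n^{-1} \sum_{k = N + 1}^{n} b_k \abs{ a_k } \leq \varepsilon \cdot \sigma_n^{-1} \sum_{k = N + 1}^{n} b_k \leq \varepsilon . \notag
\end{equation}
Putting the two estimates together yields $\abs{ c_n } \leq 2 \varepsilon$ for all $n \geq \max \set{ N, N' }$. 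Since $\varepsilon$ was arbitrary, $c_n \to 0$.

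There is no real obstacle here; the only points requiring a little care are the order of quantifiers---$N$ is chosen first from the convergence $a_k \to 0$, and only afterwards is $N'$ chosen (depending on $N$) from $\sigma_n \to +\infty$---and the use of positivity of the $b_k$ both to guarantee $\sigma_n$ is increasing (so the ratio $\sigma_n^{-1}\sum_{k=N+1}^n b_k$ is at most $1$) and to make the triangle-inequality bound on the tail go through. One could also phrase the whole argument with $\limsup_{n\to\infty} \abs{c_n} \leq \varepsilon$ for every $\varepsilon > 0$ to avoid introducing $N'$ explicitly, but the splitting above is the cleanest route.
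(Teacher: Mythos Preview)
Your proof is correct and is the standard $\varepsilon$--$N$ splitting argument for the Toeplitz (weighted Ces\`aro) lemma. The paper does not actually give its own proof of this lemma; it simply cites \cite{Polyak1987} and uses the result, so there is nothing to compare your argument against.
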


\section{Proof of Proposition \ref{prop_qst}} \label{proof_qst}

Note that $\dom ( f_{\text{QST}} )$ is open, and hence $\dom ( f_{\text{QST}} ) \cap \mathcal{X}$ is relatively open in $\mathcal{X}$. 
It is easily checked that $f_{\text{QST}}$ is twice continuously differentiable on $\dom ( \fqst )$, and hence on $\dom ( \fqst ) \cap \mathcal{X}$.
By Proposition \ref{prop_twice_differentiability}, the function $\fqst$ is locally smooth relative to $h( \cdot ) := ( 1 / 2 ) \norm{ \cdot }_{\text{F}}^2$, where $\norm{ \cdot }_{\text{F}}$ denotes the Frobenius norm. 
By the quantum version of Pinsker's inequality \cite{Hiai1981}, the von Neumann entropy is strongly convex on $\mathcal{D}$ with respect to the trace norm. 
As all norms on a finite-dimensional space are equivalent, the proposition follows. 

\bibliographystyle{acm}
\bibliography{list}

\end{document}